\let\mathbb\mathds
\DeclareMathAlphabet\oldmathcal{OMS}        {cmsy}{b}{n}
\SetMathAlphabet    \oldmathcal{normal}{OMS}{cmsy}{m}{n}
\DeclareMathAlphabet\oldmathbcal{OMS}       {cmsy}{b}{n}
\newtheorem{theorem}{Theorem}[section]
\newtheorem{lemma}[theorem]{Lemma}
\newtheorem{proposition}[theorem]{Proposition}
\newtheorem{corollary}[theorem]{Corollary}
\newtheorem{def/prop}[theorem]{Definition/Proposition}
\newenvironment{example}{\medskip \refstepcounter{theorem}
\noindent  {\bf Example \thetheorem}.\rm}{\,}
\newenvironment{remark}{\medskip \refstepcounter{theorem}
\newcommand     {\comment}[1]   {}
\newcommand{\mute}[2] {}
\newcommand     {\printname}[1] {}

\noindent  {\bf Remark \thetheorem}.\rm}{\,}
\newtheorem*{ack}{Acknowledgements}
\def\<{\langle}
\def\>{\rangle}
\def\BOne{{\mathchoice {\rm 1\mskip-4mu l} {\rm 1\mskip-4mu l}
                          {\rm 1\mskip-4.5mu l} {\rm 1\mskip-5mu l}}}
\def\fract#1#2{\raise4pt\hbox{$ #1 \atop #2 $}}
\def\decdnar#1{\phantom{\hbox{$\scriptstyle{#1}$}}
\left\downarrow\vbox{\vskip15pt\hbox{$\scriptstyle{#1}$}}\right.}
\def\bbc{{\mathbb C}}
\def\bbp{{\mathbb P}}
\def\bbq{{\mathbb Q}}
\def\bbr{{\mathbb R}}
\def\bbz{{\mathbb Z}}
\def\grb{\beta}
\def\grd{\delta}
\def\grg{\gamma}
\def\gri{\iota}
\def\grk{\kappa}
\def\gro{\omega}
\def\grr{\rho}
\def\grD{\Delta}
\def\grS{\Sigma}
\def\bfl{{\bf l}}
\def\bfv{{\bf v}}
\def\bfw{{\bf w}}
\def\cald{{\mathcal D}}
\def\calf{{\mathcal F}}
\def\calh{{\mathcal H}}
\def\cali{{\mathcal I}}
\def\cals{{\oldmathcal S}}
\def\calw{{\mathcal W}}
\def\la#1{\hbox to #1pc{\leftarrowfill}}
\def\ra#1{\hbox to #1pc{\rightarrowfill}}
\def\gp{{\mathfrak p}}
\def\gr{{\mathfrak r}}
\def\gt{{\mathfrak t}}
\def\gz{{\mathfrak z}}
\def\gC{{\mathfrak C}}
\def\hook{\mathbin{\hbox to 6pt{%
                 \vrule height0.4pt width5pt depth0pt
                 \kern-.4pt
                 \vrule height6pt width0.4pt depth0pt\hss}}}
\def\12{\xi_{k_1,k_2}}
\def\m5{M^5_{k_1,k_2}}
\begin{document}

\title{On Positivity in Sasaki Geometry}

\author{Charles P. Boyer and Christina W. T{\o}nnesen-Friedman}\thanks{Both authors were partially supported by grants from the Simons Foundation, CPB by \#245002 and \#519432 and CWT-F by \#208799 and \#422410.} 
\address{Charles P. Boyer, Department of Mathematics and Statistics,
University of New Mexico, Albuquerque, NM 87131.}
\email{cboyer@unm.edu} 
\address{Christina W. T{\o}nnesen-Friedman, Department of Mathematics, Union
College, Schenectady, New York 12308, USA } \email{tonnesec@union.edu}

\date{\today}
\keywords{Positive Sasakian structures, Positivity of Ricci curvature, join construction}

\subjclass[2000]{Primary: 53C25}

\maketitle

\markboth{Sasaki Join, Positive Ricci Curvature}{Charles P. Boyer and Christina W. T{\o}nnesen-Friedman}

\begin{abstract}
It is well known that if the dimension of the Sasaki cone $\gt^+$ is greater than one, then all Sasakian structures in $\gt^+$ are either positive or indefinite. We discuss the phenomenon of type changing within a fixed Sasaki cone. Assuming henceforth that $\dim\gt^+>1$ there are three possibilities, either all elements of $\gt^+$ are positive, all are indefinite, or both positive and indefinite Sasakian structures occur in $\gt^+$. We illustrate by examples how the type can change as we move in $\gt^+$.  If there exists a Sasakian structure in $\gt^+$ whose total transverse scalar curvature is non-positive, then all elements of $\gt^+$ are indefinite. Furthermore, we prove that if the first Chern class is a torsion class or represented by a positive definite $(1,1)$ form, then all elements of $\gt^+$ are positive.
\end{abstract}

\section{Introduction}
The main purpose of this note is to understand how Sasakian structures change as one moves through the Sasaki cone. When the dimension of the Sasaki cone $\gt^+$ is greater than $1$, the type of a Sasakian structure in $\gt^+$ must be either positive or indefinite, that is, the basic first Chern class $c_1(\calf_\xi)$ can be represented either by a positive definite or an indefinite $(1,1)$-form with respect to its transverse holomorphic CR structure $(\cald,J)$. We show by example that the type can change as we move in the Sasaki cone. Thus, the type is an invariant of the Sasakian structure, but not of the underlying CR structure.

Furthermore, it follows from the transverse Yau Theorem of El Kacimi-Alaoui \cite{ElK} that every positive Sasakian structure can be deformed by a transverse homothety to a Sasakian structure with positive Ricci curvature (cf. Theorem 7.5.20 of \cite{BG05}). This procedure was exploited in \cite{BGN03a,BGN03b,BG05h} to obtain results  concerning Sasakian structures with positive Ricci curvature. We should mention, however, that Propositions 2.6, 2.8 and Theorem 2.9 in \cite{BGN03a} concerning the necessity of being a spin manifold are incorrect. Non-spin Sasaki manifolds can have Sasakian metrics of positive Ricci curvature (cf Chapter 10 of \cite{BG05}). The point is that the positive definite transverse $(1,1)$-form representing $c_1(\calf_\xi)$ need not necessarily be the transverse K\"ahler metric of a Sasakian structure on $M$. In Appendix A we give some new explicit examples of Sasaki manifolds with positive Ricci curvature, including non-spin examples. 

\begin{ack}
The authors are grateful to Vestislav Apostolov for carefully reading our paper and suggesting some important clarifications.
We also thank Hongnian Huang, and Eveline Legendre for their interest in and comments on our work.
\end{ack}

\section{The Sasaki Cone}\label{Sascone}
In what follows our Sasaki manifolds $M$ are both oriented and co-oriented, that is we fix both the orientation of $M$ and the orientation of contact bundle $\cald$. 
A CR structure $(\cald,J)$ on a manifold $M$ is said to be of Sasaki type if there is a Sasakian structure whose underlying CR structure is $(\cald,J)$. CR structures of Sasaki type are strictly pseudoconvex. The space of Sasakian structures belonging to a CR structure $(\cald,J)$ of Sasaki type has a subspace which is identified with an open cone in the Lie algebra $\gt_k(\cald,J)$ of a maximal torus $T$ in the CR automorphism group $\gC\gr(\cald,J)$ where $k$ is the dimension of $T$. This subspace is called the {\it unreduced Sasaki cone} and is defined by 
\begin{equation}\label{unsascone}
\gt^+_k(\cald,J)=\{\xi'\in\gt_k(\cald,J)~|~\eta(\xi')>0\}
\end{equation}
where $(\xi,\eta,\Phi,g)$ is a fixed Sasakian structure of $(\cald,J)$.
It is easy to see that $\gt^+_k(\cald,J)$ is a convex cone in $\gt_k=\bbr^k$.
Then the {\it reduced Sasaki cone} is defined by $\grk(\cald,J)=\gt^+_k(\cald,J)/\calw(\cald,J)$ where $\calw(\cald,J)$ is the Weyl group of $\gC\gr(\cald,J)$. The reduced Sasaki cone $\grk(\cald,J)$ can be thought of as the moduli space of Sasakian structures whose underlying CR structure is $(\cald,J)$. We shall often suppress the CR notation $(\cald,J)$ when it is understood from the context. We also refer to a Sasakian structure $\cals$ as an element of $\gt^+_k(\cald,J)$. We view the Sasaki cone $\gt^+_k(\cald,J)$ as a $k$-dimensional smooth family of Sasakian structures.

We recall the {\it type} of a Sasakian structure \cite{BGN03a,BG05}.  A Sasakian structure $\cals=(\xi,\eta,\Phi,g)$ is {\it positive (negative)} if the basic first Chern class $c_1(\calf_\xi)$ is represented by a positive (negative) definite basic $(1,1)$-form. It is {\it null} if $c_1(\calf_\xi)=0$, and {\it indefinite} if $c_1(\calf_\xi)$ is otherwise. It is well known that for negative and null Sasakian structures the connected component of the Sasaki automorphism group is the circle group $S^1$ generated by the Reeb vector field, thus, $k=1$. So when $k=\dim \gt^+>1$ the type of any element in $\gt^+$ is either positive or indefinite. The type is an invariant of the transverse homothety class of $\xi$, that is $a\xi$ and $\xi$ have the same type for all $a\in\bbr^+$. We denote by $\gp^+\subset \gt^+$ the subset of positive Sasakian structures, and call it the {\it positive Sasaki subset}. It is convenient to introduce the notation $\grb>0$ for elements $\grb$ of the Abelian groups $H^{1,1}_B(\calf_\xi),H^2(M,\bbr),H^2(M,\bbz)$ to mean that $\grb$ can be represented by a positive definite $(1,1)$-form. 

By a conical subset we mean that if $\xi\in\gp^+$ then so is $a\xi$ for any $a\in\bbr^+$.

\begin{proposition}\label{posopen}
The positive Sasaki subset $\gp^+$ is an open conical subset of $\gt^+$.
\end{proposition}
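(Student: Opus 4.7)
The proof splits into the conical and openness claims. The conical part is immediate from the remarks preceding the statement: $\gt^+$ is itself a cone and the type of a Sasakian structure is invariant under the transverse homothety $\xi\mapsto a\xi$ for $a\in\bbr^+$, so $\xi\in\gp^+$ forces $a\xi\in\gp^+$.

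For openness, I would fix $\xi_0\in\gp^+$ and choose a positive definite basic $(1,1)$-form $\omega_+$ representing $c_1(\calf_{\xi_0})$; by averaging over the maximal torus $T$ with Lie algebra $\gt_k$, I may assume $\omega_+$ is $T$-invariant. The plan is to exhibit a smooth family $\{\omega_\xi\}$ of basic $(1,1)$-forms representing $c_1(\calf_\xi)$, parametrized by $\xi$ in a neighborhood of $\xi_0$ in $\gt^+$, with $\omega_{\xi_0}=\omega_+$. Since positive definiteness is an open condition on $C^0$-close $(1,1)$-forms on the compact manifold $M$, it will then propagate from $\omega_+$ to $\omega_\xi$ for $\xi$ sufficiently close to $\xi_0$, proving $\xi\in\gp^+$.

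To construct such a family, I would invoke the transverse $dd^c$-lemma \cite{ElK} to write
\[
\omega_+ \;=\; \tfrac{1}{2\pi}\rho^T_{\xi_0} \;+\; dd^c_{\xi_0} f_0,
\]
where $\rho^T_{\xi_0}$ is the transverse Ricci form (a representative of $2\pi c_1(\calf_{\xi_0})$) and $f_0$ is a smooth basic function, which may be chosen $T$-invariant by averaging it over $T$ (both $\omega_+$ and $\rho^T_{\xi_0}$ being $T$-invariant, the equation persists after averaging). For $\xi$ in a neighborhood of $\xi_0$ in $\gt^+$ I would then set
\[
\omega_\xi \;:=\; \tfrac{1}{2\pi}\rho^T_\xi \;+\; dd^c_\xi f_0.
\]
The key observation enabling this is that every $\xi\in\gt^+_k(\cald,J)$ shares the same underlying CR structure $(\cald,J)$, so the transverse complex structure on $\cald$ is fixed and only the splitting $TM=\bbr\xi\oplus\cald$ varies smoothly with $\xi$. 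Since $f_0$ is $T$-invariant, $L_{\xi}f_0=0$ for every $\xi\in\gt_k$, hence $f_0$ is automatically basic for $\calf_\xi$ and $dd^c_\xi f_0$ is a bona fide basic $(1,1)$-form. By construction $\omega_\xi$ represents $c_1(\calf_\xi)$, $\omega_{\xi_0}=\omega_+$, and the family is smooth in $\xi$.

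The point requiring most care will be the smooth dependence on $\xi$ of the transverse Ricci form $\rho^T_\xi$ and of the operator $dd^c_\xi$, which is what ensures that $\omega_\xi$ is $C^\infty$-close to $\omega_+$ for $\xi$ near $\xi_0$. This reduces to the smooth variation of the transverse K\"ahler metric $g^T_\xi=\tfrac12\,d\eta_\xi(\cdot,\Phi_\xi\,\cdot)|_\cald$ across the Sasaki cone, which is a routine consequence of the smooth deformation of the Sasakian structures in $\gt^+_k(\cald,J)$. Once this continuity is in hand, compactness of $M$ provides the desired neighborhood of $\xi_0$ on which $\omega_\xi|_\cald$ is pointwise positive definite, completing the proof that $\gp^+$ is open.
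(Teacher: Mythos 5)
Your proof is correct and follows essentially the same route as the paper, which simply asserts that positivity is an open condition and that $\gp^+$ is conical because the transverse Ricci form is scale-invariant; your argument supplies the details behind that openness claim (a $T$-invariant potential via the transverse $dd^c$-lemma and smooth dependence of $\rho^T_\xi$ on $\xi$ over the fixed CR structure $(\cald,J)$), which the paper leaves implicit.
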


\begin{proof}
Positivity is an open condition, and $\gp^+$ is conical since the transverse Ricci form $\grr^T_\xi$ is invariant under scaling of $\xi$.  
\end{proof}

So if $\gt^+$ contains a positive Sasakian structure it contains an open set of positive Sasakian structures; hence, $\gp^+$ contains a quasi-regular Sasakian structure. Our next result relates the existence of positive Sasakian structures in $\gt^+$ with the total transverse scalar curvature defined by 
\begin{equation}\label{S_xidef}
{\bf S}_\xi=\int_M s^T_\xi dv_\xi
\end{equation}
where $\xi\in\gt^+$. We mention that, as with the volume, ${\bf S}_\xi$ depends only on the isotopy class of the Sasakian structure. Our results make use of the remarkable Equation (32) from \cite{BHL17} which stated as Lemma 5.2 and Proposition 5.3 there, becomes

\begin{theorem}[\cite{BHL17}]\label{BHLthm}
The following hold:
\begin{enumerate}
\item If there exists a Sasakian structure $\xi_0\in\gt^+$ whose transverse scalar curvature is positive almost everywhere, then ${\bf S}_\xi>0$ for all $\xi\in\gt^+$;
\item If $\gp^+$ is non-empty, the total transverse scalar curvature ${\bf S}_\xi$ is positive for all $\xi\in\gt^+$. Alternatively stated, if $\dim\gt^+>1$ and there exists $\xi\in\gt^+$ whose total transverse scalar curvature ${\bf S}_\xi$ is non-positive, then all Sasakian structures in $\gt^+$ are indefinite.
\end{enumerate}
\end{theorem}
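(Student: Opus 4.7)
The plan is to deduce both parts from the integral identity (Equation (32) of \cite{BHL17}) that expresses ${\bf S}_\xi$ for an arbitrary $\xi\in\gt^+$ as an integral over $M$ against the transverse volume data of a fixed reference Sasakian structure $\xi_0$ in the same cone. Schematically, for $\xi,\xi_0\in\gt^+$ the identity takes the shape
\[{\bf S}_\xi \;=\; \int_M \frac{s^T_{\xi_0}}{\eta_0(\xi)^{n+1}}\,dv_{\xi_0} \;+\; R(\xi,\xi_0),\]
where $2n+1=\dim M$, the function $\eta_0(\xi)$ is strictly positive on $M$ (by \eqref{unsascone}, since $\xi$ lies in the unreduced Sasaki cone of the CR structure underlying $\xi_0$), and the correction $R(\xi,\xi_0)$ is manifestly non-negative (pointwise a sum-of-squares type expression built from transverse almost-K\"ahler quantities for $\xi$ measured against the $\xi_0$-data). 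The first step is to record this identity carefully in the precise form needed.

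Granted the identity, part (1) is immediate: if $s^T_{\xi_0}>0$ almost everywhere then the first integrand is strictly positive on a set of full measure, while the second term is non-negative, hence ${\bf S}_\xi>0$.

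For part (2) the plan is to reduce to (1). Assume $\gp^+\neq\emptyset$ and fix any positive Sasakian structure $\cals_0=(\xi_0,\eta_0,\Phi_0,g_0)\in\gt^+$. By the transverse Yau theorem of El Kacimi-Alaoui \cite{ElK} (in the form of Theorem 7.5.20 of \cite{BG05}), a suitable transverse K\"ahler deformation of $\cals_0$, followed by a transverse homothety, produces a Sasakian structure whose Ricci tensor is positive definite and whose Reeb vector still lies on the ray through $\xi_0$ in $\gt^+$. Its transverse scalar curvature is then strictly positive on all of $M$, so replacing $\xi_0$ by this deformed Reeb vector places us in the hypothesis of part (1). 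Hence ${\bf S}_\xi>0$ for every $\xi\in\gt^+$. The alternative formulation is the contrapositive, using that when $\dim\gt^+>1$ no element of $\gt^+$ can be null or negative, so $\gp^+=\emptyset$ forces every element of $\gt^+$ to be indefinite.

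The main obstacle is really the non-negativity of the correction term $R(\xi,\xi_0)$, that is, the derivation of the BHL identity itself, which is the substantive technical input drawn from \cite{BHL17}. Once that is taken as a black box, the two reductions above are routine book-keeping, and the only remaining subtlety is verifying that the transverse Yau deformation keeps us in the same Sasaki cone $\gt^+$ so that part (1) is applicable to the deformed Reeb vector.
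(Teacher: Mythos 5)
Your write-up is essentially the argument of the cited source, and you should be aware that the paper under review gives no proof of this statement at all: it is imported verbatim from \cite{BHL17}, where part (1) is Lemma 5.2, part (2) is Proposition 5.3, and the ``identity'' you invoke schematically is Equation (32) there. Since you declare that identity a black box, the only content you actually supply is the two reductions, and those do match the derivation in \cite{BHL17}: (1) is immediate from non-negativity of the correction term plus strict positivity of the weighted $s^T_{\xi_0}$ integral, and (2) reduces to (1) via the transverse Yau theorem of \cite{ElK} (Theorem 7.5.20 of \cite{BG05}).

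Two points in your reduction for (2) deserve to be closed rather than flagged. First, the transverse K\"ahler deformation produced by the transverse Yau theorem replaces $\eta_0$ by $\eta_0+d^c_B\phi$ and therefore changes the underlying CR structure, so the deformed structure does not literally lie in $\gt^+(\cald,J)$; this is repaired by taking the Yau potential invariant under the maximal torus (possible by uniqueness and averaging), which identifies the Sasaki cone of the deformed CR structure with the original $\gt^+$, together with the fact recorded in the paper just before the theorem that ${\bf S}_\xi$, like the volume, depends only on the isotopy class of the Sasakian structure, so positivity of ${\bf S}_\xi$ is unaffected by the isotopy. Second, the transverse homothety you append is unnecessary for your purpose: positivity of the transverse Ricci form already gives $s^T>0$ everywhere, which is all that part (1) requires (the homothety is only needed if one wants positivity of the full Riemannian Ricci tensor, which is not at issue here). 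With these amendments your proof coincides with that of \cite{BHL17}; it is not an independent argument, since the analytic substance lives entirely in the black-boxed Equation (32).
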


It is well known that the positivity of a Sasakian structure depends only on the homothety class, that is it only depends on the ray in $\gt^+$. However, both the volume ${\bf V}_\xi$ and the total transverse scalar curvature ${\bf S}_\xi$ vary with the point on the ray. It is, thus, convenient to consider the Einstein-Hilbert functional ${\bf H}(\xi)$ \cite{BHLT15,BHL17}.
Actually, it is more convenient to consider the `signed' version
\begin{equation}\label{signEH}
{\bf H}_1(\xi)= {\rm sign}({\bf S}_\xi)\frac{|{\bf S}_\xi|^{n+1}}{{\bf V}^n_\xi},
\end{equation}
when the dimension of the Sasaki manifold is $2n + 1$. 
It was shown in \cite{BHL17} that ${\bf H}_1(\xi)$ tends to $+\infty$ as $\xi$ approaches the boundary, thus, ${\bf H}_1(\xi)$ has a global minimum $\xi_{min}$. The critical points of ${\bf H}_1(\xi)$ in $\gt^+$ are ${\bf S}_\xi=0$ and the zeros of the Sasaki-Futaki invariant \cite{BHLT15}. In terms of the Einstein-Hilbert functional, item (2) of Theorem \ref{BHLthm} takes the form
\begin{theorem}[\cite{BHL17}]\label{scalarpos2}
If ${\bf H}_1(\xi_{min})\leq 0$, all Sasakian structures in $\gt^+$ are indefinite. Alternatively, if $\gp^+$ is non-empty, ${\bf H}_1(\xi_{min})>0$.
\end{theorem}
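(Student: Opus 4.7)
The plan is to derive Theorem \ref{scalarpos2} as a direct corollary of Theorem \ref{BHLthm}(2), using the fact that the signed Einstein-Hilbert functional ${\bf H}_1$ records precisely the sign of the total transverse scalar curvature.

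First, I would record the elementary sign observation. Since the volume ${\bf V}_\xi$ is strictly positive for every $\xi \in \gt^+$, the definition
$$ {\bf H}_1(\xi) = {\rm sign}({\bf S}_\xi)\,\frac{|{\bf S}_\xi|^{n+1}}{{\bf V}_\xi^{n}} $$
shows that ${\rm sign}({\bf H}_1(\xi)) = {\rm sign}({\bf S}_\xi)$, and in particular ${\bf H}_1(\xi) \leq 0$ if and only if ${\bf S}_\xi \leq 0$. This also applies at the minimizer $\xi_{min}$, whose existence in $\gt^+$ was established in \cite{BHL17} (recalled just before the statement).

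Next, I would assume ${\bf H}_1(\xi_{min}) \leq 0$. By the sign observation this yields ${\bf S}_{\xi_{min}} \leq 0$, so $\xi_{min}$ is an element of $\gt^+$ with non-positive total transverse scalar curvature. Applying Theorem \ref{BHLthm}(2) to this $\xi_{min}$ gives that every Sasakian structure in $\gt^+$ is indefinite, which is the first assertion. The equivalent contrapositive formulation then reads: if $\gp^+$ is non-empty (so that at least one element of $\gt^+$ is positive), then ${\bf H}_1(\xi_{min}) > 0$.

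There is no genuine obstacle beyond the sign translation, since the analytic content—that the total transverse scalar curvature is positive throughout $\gt^+$ whenever a single element has ${\bf S}_\xi \geq 0$ or the subset $\gp^+$ is non-empty—is already packaged in Theorem \ref{BHLthm}. The only mild subtlety is noting that ${\bf H}_1(\xi_{min}) = 0$ forces ${\bf S}_{\xi_{min}} = 0$ rather than the more vacuous possibility ${\bf V}_{\xi_{min}} = \infty$, which is excluded since volumes of Sasakian structures are finite.
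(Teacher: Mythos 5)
Your proposal is correct and matches the paper's (implicit) argument: the paper presents Theorem \ref{scalarpos2} simply as a restatement of item (2) of Theorem \ref{BHLthm} in terms of ${\bf H}_1$, which is exactly your sign observation ${\rm sign}({\bf H}_1(\xi))={\rm sign}({\bf S}_\xi)$ applied at $\xi_{min}$. Nothing further is needed.
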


The hypothesis here or that of (2) of Theorem \ref{BHLthm} is significantly weaker than that of (1) of Theorem \ref{BHLthm}. Thus, one may wonder whether there exists contact structures of Sasaki type whose entire Sasaki cone consists of indefinite Sasakian structures, but ${\bf H}_1(\xi_{min})>0$. The next example indicates that this is fairly common.

\begin{example}\label{Riesurfex} Let $N=\grS_g$ be a Riemann surface of genus $g\geq 2$ and consider Examples 5.7 and  5.15 of \cite{BoTo13} which describe the $S^3_\bfw$ join $M_{g-1,1,12,1}$ as an $S^3$ bundle over $\grS_g$ (see the next section for a brief description of this join construction). It is the trivial $S^3$ bundle if $g$ is odd, and the non-trivial bundle if $g$ is even. Since in this case $N$ is not Fano and $\gt^+_\bfw=\gt^+$, Corollary \ref{openpos} below implies that the entire Sasaki cone consists of indefinite Sasakian structures. 
By Equation (51) of \cite{BoTo13} these Sasaki manifolds have a unique CSC ray which by Example 5.15 has positive constant transverse scalar curvature\footnote{The transverse scalar curvature in Example 5.15 of \cite{BoTo13} should be $8\pi$ not $8\pi /3$.} $8\pi$ independent of $g$.
By Theorem \ref{BHLthm} we then know that ${\bf S}_\xi>0$ for all $\xi\in\gt^+$ and so ${\bf H}_1(\xi_{min})>0$, and, by Theorem 1.4 of \cite{BHLT15} together with Proposition 5.10 of \cite{BoTo13}, the unique CSC ray is the unique
critical point, $\xi_{min}$, of ${\bf H}_1$ (since we are on a five dimensional manifold, ${\bf H}_1$ is just the Hilbert-Einstein functional here). Summarizing we have\begin{proposition}\label{indefH1pos}
For each $g\geq 2$ the contact manifold $M_{g-1,1,12,1}$ of Sasaki type has a Sasaki cone saturated by indefinite Sasakian structures and with ${\bf H}_1(\xi_{min})>0$.
\end{proposition}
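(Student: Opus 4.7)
The proposition has two claims to establish for each $g \geq 2$: (i) every element of the Sasaki cone $\gt^+$ is indefinite, and (ii) $\mathbf{H}_1(\xi_{min}) > 0$. The plan is to read off claim (i) from a structural result invoked in the preceding paragraph, and then deduce (ii) by producing a single Sasakian ray in $\gt^+$ with strictly positive transverse scalar curvature and applying part (1) of Theorem \ref{BHLthm}.

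First, I would observe that the base $N = \grS_g$ with $g \geq 2$ is not Fano (its first Chern class is negative), and for the $S^3_\bfw$ join $M_{g-1,1,12,1}$ the entire Sasaki cone is exhausted by the $\bfw$-Sasaki subcone, i.e.\ $\gt^+_\bfw = \gt^+$. Under these hypotheses, Corollary \ref{openpos} (established earlier in the paper) directly forces every Sasakian structure in $\gt^+$ to be indefinite. This settles (i) without further work.

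For (ii), I would invoke Example 5.15 of \cite{BoTo13} (with the corrected value $8\pi$ noted in the footnote) to obtain a distinguished ray in $\gt^+$ carrying a Sasakian structure of constant transverse scalar curvature $s^T \equiv 8\pi > 0$. In particular, this structure has transverse scalar curvature positive almost everywhere, so part (1) of Theorem \ref{BHLthm} applies and yields
\begin{equation*}
\mathbf{S}_\xi > 0 \quad \text{for every } \xi \in \gt^+.
\end{equation*}
Since $\mathbf{H}_1(\xi) = \mathrm{sign}(\mathbf{S}_\xi)\,|\mathbf{S}_\xi|^{n+1}/\mathbf{V}_\xi^{n}$ by \eqref{signEH}, the positivity of $\mathbf{S}_\xi$ throughout $\gt^+$ implies $\mathbf{H}_1 > 0$ on all of $\gt^+$; in particular $\mathbf{H}_1(\xi_{min}) > 0$, where existence of the global minimizer $\xi_{min}$ is guaranteed by the boundary blow-up of $\mathbf{H}_1$ recalled from \cite{BHL17}.

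There is no serious obstacle here; the proposition is essentially a packaging of existing input. The only point requiring a little care is verifying that the CSC value from Example 5.15 of \cite{BoTo13} is genuinely positive (and independent of $g$), so that the hypothesis "positive almost everywhere" of Theorem \ref{BHLthm}(1) is actually met — and this is precisely what the corrected value $8\pi$ (rather than $8\pi/3$) provides.
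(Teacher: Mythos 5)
Your proof is correct and follows essentially the same route as the paper: Corollary \ref{openpos} (with $N=\grS_g$ not Fano and $\gt^+_\bfw=\gt^+$) gives indefiniteness throughout the cone, and the positive CSC ray of transverse scalar curvature $8\pi$ from Example 5.15 of \cite{BoTo13} feeds Theorem \ref{BHLthm}(1) to give ${\bf S}_\xi>0$ on all of $\gt^+$, hence ${\bf H}_1(\xi_{min})>0$. The paper additionally identifies $\xi_{min}$ with that unique CSC ray via \cite{BHLT15} and \cite{BoTo13}, but this extra identification is not needed for the statement itself, so your argument is complete as written.
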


\end{example}

We will see later by examples that although ${\bf S}_{\xi}>0$ for all elements $\xi\in\gt^+$, there can be a range of positivity that is not all of $\gt^+$, that is $\gp^+$ can be a proper subset of $\gt^+$. Outside of the range of positivity the Sasakian structures are indefinite.

\begin{remark}\label{defrem}
A recent result of Nozawa  \cite{Noz14} shows that deformations of the transverse holomorphic foliation of a Sasakian structure do not generally remain Sasakian. The obstruction lies in the basic Hodge cohomology group $H^{0,2}_B(\calf_\xi)$. However, for positive Sasakian structures, there is a vanishing theorem $H^{0,q}_B(\calf_\xi)=0$ \cite{BGN03a}, so such small deformations of positive Sasakian structures remain positive which is consistent with Corollary \ref{openpos} below.
\end{remark}

In the special case $c_1(\cald)=0$ we have
\begin{theorem}\label{c10pos}
Let $(M,\cals)$ be a Sasaki manifold with $c_1(\cald)_\bbr=c_1(\cald)\otimes\bbr=0$ and suppose that its Sasaki cone $\gt^+$ contains a positive Sasakian structure. Then every element of the Sasaki cone is positive, that is $\gt^+=\gp^+$.
\end{theorem}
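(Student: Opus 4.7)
The plan is to reduce the problem to Theorem \ref{BHLthm}(2) via a cohomological argument forcing $c_1(\calf_\xi)$ to be a positive real multiple of the transverse K\"ahler class $[d\eta_\xi]_B$. The first step is to exploit the standard Gysin-type long exact sequence relating basic to ordinary de Rham cohomology for the Reeb foliation: the natural forgetful map $H^2_B(\calf_\xi)\to H^2(M,\bbr)$ sends $c_1(\calf_\xi)$ to $c_1(\cald)_\bbr$, and its kernel is the one-dimensional real line spanned by $[d\eta_\xi]_B$ (since $H^0_B(\calf_\xi)=\bbr$ and cup product with $[d\eta_\xi]_B$ is injective on $H^0_B$). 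The hypothesis $c_1(\cald)_\bbr=0$ then yields, for each $\xi\in\gt^+$, a real number $a(\xi)$ with
$$c_1(\calf_\xi)=a(\xi)\,[d\eta_\xi]_B.$$

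Next I would convert this cohomological relation into one about the total transverse scalar curvature. The standard pointwise identity $s^T_\xi\,(\omega^T)^n=2n\,\grr^T_\xi\wedge(\omega^T)^{n-1}$ with $\omega^T=\tfrac{1}{2}d\eta_\xi$, integrated against $\eta_\xi$ and combined with the cohomological identity $[\grr^T_\xi]_B=2\pi a(\xi)\,[d\eta_\xi]_B$, yields
$${\bf S}_\xi=c_n\,a(\xi)\,{\bf V}_\xi$$
for a positive universal constant $c_n$ depending only on $n$. In particular ${\rm sign}({\bf S}_\xi)={\rm sign}(a(\xi))$.

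Finally I would invoke Theorem \ref{BHLthm}(2): since $\gp^+$ is nonempty by hypothesis, ${\bf S}_\xi>0$ for every $\xi\in\gt^+$, and combined with ${\bf V}_\xi>0$ this forces $a(\xi)>0$ uniformly on $\gt^+$. Because the transverse K\"ahler form $\tfrac{1}{2}d\eta_\xi|_\cald$ is positive definite, the basic $(1,1)$-form $\tfrac{a(\xi)}{2}d\eta_\xi|_\cald$ is positive definite and represents $c_1(\calf_\xi)$. Hence $\xi\in\gp^+$, and $\gt^+=\gp^+$.

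The main obstacle I expect is keeping the first step cohomologically clean as $\xi$ ranges over $\gt^+$: both the identification of the image of $c_1(\calf_\xi)$ with $c_1(\cald)_\bbr$ and the computation of the kernel of the forgetful map rest on properties of the foliation $\calf_\xi$, which genuinely changes with $\xi$ and in particular need not be quasi-regular. One must either verify that the Gysin sequence used is valid for an arbitrary Sasakian Reeb foliation, or reduce to the quasi-regular case by continuity and the openness of $\gp^+$ (Proposition \ref{posopen}). Once this is settled, the remainder is a routine Chern--Weil computation combined with Theorem \ref{BHLthm}.
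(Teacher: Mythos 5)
Your argument is correct, and its first half is exactly the paper's: both extract from the exact sequence of Lemma 7.5.22 of \cite{BG05} the relation $c_1(\calf_\xi)=a(\xi)[d\eta_\xi]_B$ for every $\xi\in\gt^+$. (Your worry about irregular Reeb fields is harmless: that sequence is valid for the Reeb foliation of an arbitrary compact Sasakian manifold, quasi-regular or not, since it is a Riemannian flow; no quasi-regular approximation is needed, and indeed the paper applies it to every element of the cone without comment.) Where you genuinely diverge is in showing $a(\xi)>0$. The paper argues directly: if $a(\xi)\leq 0$ for some $\xi\in\gt^+$, that structure is null or negative, and for such structures the identity component of the automorphism group is the Reeb circle, forcing $\dim\gt^+=1$; since type is constant along a ray, the whole cone would then be non-positive, contradicting $\gp^+\neq\emptyset$. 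You instead convert $a(\xi)$ into the sign of the total transverse scalar curvature via ${\bf S}_\xi=c_n\,a(\xi)\,{\bf V}_\xi$ (which is legitimate: $\int_M\eta\wedge\alpha\wedge(d\eta)^{n-1}$ depends only on the basic class of a closed basic $2$-form $\alpha$, by the integration by parts you implicitly use, since $(d\eta)^n\wedge\beta$ vanishes for basic $1$-forms $\beta$) and then invoke Theorem \ref{BHLthm}(2), whose hypothesis $\gp^+\neq\emptyset$ is exactly the assumption of the theorem. Both routes are sound. The paper's is more elementary and self-contained, needing only the classical rigidity of null/negative Sasakian structures; yours leans on the heavier input of \cite{BHL17} already quoted as Theorem \ref{BHLthm}, but in exchange records the sharper fact that, when $c_1(\cald)_\bbr=0$, the sign of ${\bf S}_\xi$ coincides with the sign of $a(\xi)$ at every point of $\gt^+$.
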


\begin{proof}
From the exact sequence in Lemma 7.5.22 of \cite{BG05} we have $c_1(\calf_\xi)=a[d\eta]_B$ for some $a\in\bbr$. If $a\leq 0$ for some Sasakian structure $\cals\in\gt^+$ then $\dim\gt^+=1$ and $c_1(\calf_\xi)\leq 0$ which contradicts the hypothesis that $\gt^+$ contains a positive element. So we must have $a>0$ with $c_1(\calf_\xi)=a[d\eta]_B$ for any $\xi\in\gt^+$. This implies that the entire cone is positive.
\end{proof}

We want to understand generally the effect of $c_1(\cald)$ on the Sasakian structures in $\gt^+$. For the positive case we have

\begin{theorem}\label{pos11form}
Let $(\cald,J)$ be a CR manifold of Sasaki type.
If $c_1(\cald)>0$ then all Sasakian structures in $\gt^+$ are positive, hence, $\gt^+=\gp^+$.
\end{theorem}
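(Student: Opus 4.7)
The plan is to find, for each $\xi\in\gt^+$, a positive definite basic $(1,1)$-form representing $c_1(\calf_\xi)$. First I would use the hypothesis to pick a positive definite $(1,1)$-form $\rho$ on $(\cald,J)$ representing $c_1(\cald)\in H^2(M,\bbr)$, and then average $\rho$ over the maximal torus $T\subset\gC\gr(\cald,J)$. Since $T$ preserves $J$ and $\cald$ and positive definiteness is preserved under convex combinations, the averaged form is still a positive $(1,1)$-form and is now $T$-invariant. Being horizontal and $T$-invariant, $\rho$ is automatically basic with respect to the Reeb foliation $\calf_\xi$ for every $\xi\in\gt^+\subset\mathrm{Lie}(T)$, so its basic class $[\rho]_B\in H^{1,1}_B(\calf_\xi)$ is positive and has forgetful image $c_1(\cald)\in H^2(M,\bbr)$.

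Now $c_1(\calf_\xi)$ and $[\rho]_B$ have the same image in $H^2(M,\bbr)$, so by the exact sequence of Lemma 7.5.22 of \cite{BG05}, whose kernel in $H^{1,1}_B(\calf_\xi)$ is $\bbr\cdot[d\eta_\xi]_B$, there is a real number $a_\xi$ with
\[
c_1(\calf_\xi)=[\rho]_B+a_\xi[d\eta_\xi]_B.
\]
Since $d\eta_\xi$ is exact on $M$, the basic form $\rho+a_\xi\,d\eta_\xi$ is both a basic representative of $c_1(\calf_\xi)$ and an ordinary representative of $c_1(\cald)$. When $a_\xi\geq 0$ it is the sum of a positive definite and a positive semi-definite form, hence positive definite, so $c_1(\calf_\xi)>0$ immediately.

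The hard part will be the case $a_\xi<0$, in which $\rho-|a_\xi|\,d\eta_\xi$ could a priori fail to be positive at some points. Here I would follow the strategy of the proof of Theorem~\ref{c10pos}: assume for contradiction that $c_1(\calf_\xi)$ is not positive; use that null or negative Sasakian structures force $\dim\gt^+=1$ and that the forgetful image $c_1(\cald)>0$ is nonzero to exclude those types; and then exploit the positivity of both $[\rho]_B$ and $[d\eta_\xi]_B$ in $H^{1,1}_B(\calf_\xi)$, together with the freedom to shift $\rho$ within its de Rham class (using that $d\eta_\xi$ is exact in $H^2(M,\bbr)$), to produce a positive basic representative of $c_1(\calf_\xi)$, contradicting the remaining indefinite case. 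Combined with the openness of $\gp^+$ from Proposition~\ref{posopen}, this will yield $\gt^+=\gp^+$.
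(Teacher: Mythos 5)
Your reduction to the sign of $a_\xi$ leaves the essential case unproved, and the strategy you sketch for it cannot work as stated. When $a_\xi<0$ you propose to imitate Theorem \ref{c10pos} and to ``exploit the positivity of $[\rho]_B$ and $[d\eta_\xi]_B$'' to produce a positive basic representative of $c_1(\calf_\xi)=[\rho]_B-|a_\xi|[d\eta_\xi]_B$; but a difference of positive basic classes need not be positive, and excluding the null and negative types only brings you back to the positive-or-indefinite dichotomy --- indefinite is exactly what you must rule out, so no contradiction is reached. The argument of Theorem \ref{c10pos} does not transfer: there $c_1(\cald)_\bbr=0$ forces $c_1(\calf_\xi)=a[d\eta]_B$, so the type is decided by the sign of a single real number, whereas in your situation the class has two ``directions,'' and the type-changing analysis of Proposition \ref{typech} (where $c_1(\calf_{\xi_\bfv})=s_*c_1(\cald)+a_\bfv[d\eta_\bfv]_B$ is indefinite for suitable $\bfv$ even though $[d\eta_\bfv]_B>0$) shows that formal positivity of the two ingredients decides nothing. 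Shifting $\rho$ within its de Rham class cannot help either, since that does not move its basic class relative to $c_1(\calf_\xi)$ modulo the same ambiguity. The paper's proof never meets $a_\xi$: it argues, via the definition of $\gri_*$ (the map induced by inclusion of basic forms), that the positive $(1,1)$-form representing $c_1(\cald)$ is already a basic representative of $c_1(\calf_\xi)$ itself, and then invokes El Kacimi-Alaoui's transverse Yau theorem to realize it as $\frac{1}{2\pi}\grr^T_\xi$ of a Sasakian structure with underlying CR structure $(\cald,J)$. So the point you must actually supply is precisely the one your sketch postpones: a positive representative lying in the basic class $c_1(\calf_\xi)$, not merely in some basic class with the same image in $H^2(M,\bbr)$.

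There is also a secondary gap at the start: averaging over $T$ gives $\call_\xi\rho=0$ but not $\xi\hook\rho=0$, and a closed $2$-form representing $c_1(\cald)$ has no reason to annihilate the Reeb field, so ``horizontal, hence basic'' is unjustified and $[\rho]_B$ is not yet defined. The natural repair (writing $\xi\hook\rho=df$, which needs an exactness argument, and replacing $\rho$ by $\rho+d(f\eta)$) restores horizontality but adds $f\,d\eta$ along $\cald$, which can destroy positive definiteness where $f<0$. Thus even your easy case $a_\xi\geq 0$ is incomplete as written, for essentially the same underlying reason: producing a representative that is simultaneously positive, basic, and in the correct class is the whole content of the theorem.
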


\begin{proof}
From Lemma 7.5.22 of \cite{BG05} for any Sasakian structure $\cals=(\xi,\eta,\Phi,g)$ in $\gt^+$ we have the split exact sequence
$$0\ra{2.5} \bbr\fract{\grd}{\ra{2.5}} H^2_B(\calf_\xi)\fract{\gri_*}{\ra{2.5}} H^2(M,\bbr)\ra{2.5}H^1_B(\calf_\xi),  $$
with $\grd(a)=a[d\eta]_B$ and $\gri_*c_1(\calf_\xi)=c_1(\cald)$. Now $c_1(\cald)$ is represented by a positive $(1,1)$-form, and from the definition of $\gri_*$ it is the same $(1,1)$ form that represents $c_1(\calf_\xi)$. Thus, by El Kacimi Alaoui's theorem $c_1(\cald)$ can be represented by $1/2\pi$ times the transverse 
Ricci form $\grr^T_\xi$ of a Sasakian structure $\cals$ with underlying CR structure $(\cald,J)$, cf. Theorem 7.5.20 of \cite{BG05}. Moreover, $\grr^T_\xi$ is positive.  Since $\cals$ is an arbitrary element of $\gt^+$, all Sasakian structures in $\gt^+$ are positive.
\end{proof}

We now consider the general case. Assume that $\dim \gt^+>1$ and for simplicity that $H^1(M,\bbr)=0$. Then from Proposition 7.2.3 of \cite{BG05} $H^1_B(\calf_\xi)\approx H^1(M,\bbr)=0$, so there exists a monomorphism $s_*:H^2(M,\bbr)\ra{1.8} H^2_B(\calf_\xi)$ and an $a_\xi\in\bbr$ such that 
\begin{equation}\label{c1poseqn}
c_1(\calf_\xi)=s_*c_1(\cald)+a_\xi[d\eta]_B.
\end{equation}
If $c_1(\cald)=0$ we recover Theorem \ref{c10pos} and when $c_1(\cald)>0$ Thereom \ref{pos11form} forces $c_1(\calf_\xi)$ to be positive. For any other case $c_1(\calf_\xi)$ can be either positive or indefinite depending on choices. This gives rise to {\it type changing} which we discuss for the $\bfw$ subcone of an $S^3_\bfw$-join in Section \ref{posinsascone}.


Let us assume that $\gp^+\neq \emptyset$.  Given a non-zero first Chern class $c_1(\cald)\in H^2(M,\bbz)$ assume that there exists a non-negative real number $B'$  and a primitive class $\grg\in H^2(M,\bbz)$ such that $c_1(\cald)+B'\grg>0$. We define $B$ by
\begin{equation}\label{supB}
B=\inf\{B'\in\bbr_{\geq 0} ~|~ c_1(\cald)+B'\grg>0\}.
\end{equation} 
$B$ is an invariant of the family of Sasakian structures defined by $\gt^+$. We give an explicit value of $B>0$ for certain contact structures of Sasaki type on $S^2\times S^3$ described in Proposition \ref{typech} below and show that it is related to the map $g$ in \cite{BoPa10}.

\section{The Weighted $S^3$ Join Construction}\label{join}
We give a brief review of the weighted $S^3_\bfw$ join construction. We refer to our previous papers \cite{BoTo13,BoTo14NY,BoTo14a} for more details. Our construction is given a regular Sasaki manifold $M$ as the total space of an $S^1$-bundle over the Hodge manifold $N$ with primitive K\"ahler class $[\gro_N]$, and a pair of relatively prime positive integers $\bfl=(l_1,l_2)$, we define the $\bfl=(l_1,l_2)$-join of $M$ with $S^3_\bfw$ to be the quotient of $M\times S^3$ by the $S^1$-action 
\begin{equation}\label{joindefeq}
(x,u;z_1,z_2)\mapsto (x,e^{il_2\theta}u;e^{-il_1w_1\theta}z_1,e^{-il_1w_2\theta}z_2)
\end{equation}
where $u$ denotes the fiber of the natural projection $\pi:M\ra{1.6} N$, and $|z_1|^2+|z_2|^2=1$ describes the unit sphere in $\bbc^2$. For this quotient to be smooth we must impose $\gcd(l_2,l_1w_1w_2)=1$ and we denote the quotient by $M_{\bfl,\bfw}=M\star_\bfl S^3_\bfw$. Now $M_{\bfl,\bfw}$ has an induced Sasakian structure whose contact form $\eta_{\bfl,\bfw}$ and Reeb vector field $\xi_{\bfl,\bfw}$ satisfy 
\begin{equation}\label{sasjoinxieta}
\pi_L^*\eta_{\bfl,\bfw}=l_1\eta_M+l_2\eta_\bfw, \qquad \xi_{\bfl,\bfw}=(\pi_L)_*\bigl(\frac{1}{2l_1}\xi_M+\frac{1}{2l_2}\xi_\bfw\bigr),
\end{equation}
where $\eta_M,\xi_M,(\eta_\bfw,\xi_\bfw)$ are the contact 1-form and Reeb vector field on $M,(S^3)$, respectively. Since the kernel of $(\pi_L)_*$ is generated by the vector field $L_{\bfl,\bfw}=\frac{1}{2l_1}\xi_M-\frac{1}{2l_2}\xi_\bfw$, the Reeb vector field of $\eta_{\bfl,\bfw}$ on $M_{\bfl,\bfw}$ is $\frac{1}{l_2}\xi_\bfw$. Furthermore, the quotient by the $S^1$-action of $M_{\bfl,\bfw}$ generated by $\xi_{\bfl,\bfw}$ is the projective algebraic orbifold $N\times \bbc\bbp^1[\bfw]$ with K\"ahler form $\gro_{\bfl,\bfw}$ satisfying $\pi^*\gro_{\bfl,\bfw}=d\eta_{\bfl,\bfw}$. So the join fits into the commutative diagram
\begin{equation}\label{s2comdia}
\begin{matrix}  M\times S^3_\bfw &&& \\
                          &\searrow\pi_L && \\
                          \decdnar{\pi_{2}} && M_{\bfl,\bfw} &\\
                          &\swarrow\pi_1 && \\
                         N\times\bbc\bbp^1[\bfw] &&& 
\end{matrix}
\end{equation}  
where the $\pi$s are the obvious projections. Without loss of generality we can take $w_1\geq w_2$. Since $\bbc\bbp^1[\bfw]$ has a Hamiltonian vector field which lifts to $M_{\bfl,\bfw}$, the latter has a Sasaki cone of dimension at least two. This two dimensional cone is generated by the lifted Hamiltonian vector field and the Reeb vector field $\xi_{\bfl,\bfw}$ and can be identified with the open first quadrant of $\bbr^2$, described by $\{(v_1,v_2)~|~v_1,v_2>0\}$. It is called the $\bfw$-Sasaki cone and it denoted by $\gt^+_{\bfl,\bfw}$. We shall abuse notation somewhat saying that $\bfv$ or a Sasakian structure belongs to $\gt^+_{\bfl,\bfw}$. This $\bfw$-Sasaki cone is associated to the underlying strictly pseudoconvex CR structure $(\cald_{\bfl,\bfw},J)$. 

Note that by taking $\pi^*[\gro_N]= l_2\grg$ and $\pi^*[\gro_\bfw]=-l_1\grg$ for some generator $\grg\in H^2(M_{l_1,l_2,\bfw},\bbz)$, where $\pi^*\gro_N =d \eta_M$,
$\pi^*\gro_\bfw = d\eta_\bfw$, and $\pi$ is the obvious projection in each case, we have
\begin{equation}\label{c1cald}
c_1(\cald_{l_1,l_2,\bfw})=\pi^*c_1(N)-l_1|\bfw|\grg.
\end{equation}

If we choose a quasi-regular Reeb field in $\gt^+_{\bfl,\bfw}$, its quotient orbifold is a ruled manifold of the form $S_n=\bbp(\BOne\oplus L_n)$ where $L_n$ is a complex line bundle of `degree' $n$ (see Section 2 of \cite{BoTo14a} for the meaning of `degree'), and with an orbifold structure given by a branch divisor of the form
\begin{equation}\label{orbbranch}
\grD= (1-\frac{1}{m_1})D_1+ (1-\frac{1}{m_2})D_2,
\end{equation}
where $D_1 (D_2)$ are the zero (infinity) section of $L_n$, respectively, and $m_i$ is the ramification index of the branch divisor $D_i$. The pair $(S_n,\grD)$ is called a {\it log pair}.
The values $n$ and $m_i$ are given by
\begin{equation}\label{ramind}
n =\frac{l_1}{s}(w_1v_2-w_2v_1), \qquad m_i =mv_i 
\end{equation} 
for $i=1,2$, where $s=\gcd(|w_2v_1-w_1v_2|,l_2)$, and $m = \frac{l_2}{s}$.

\section{Positivity in the $\bfw$-Sasaki Cone}\label{posinsascone}
Here we study the so-called $\bfw$-Sasaki cone for the 4-parameter class of Sasaki manifolds $M_{\bfl,\bfw}$. In particular, we are interested in how the type of a Sasaki manifold changes as we move in the Sasaki cone. We denote by $\gp^+_\bfw$ the intersection of $\gp^+$ with the subcone $\gt^+_\bfw$.

\subsection{$c_1^{orb}$ and $c_1(\calf_\xi)$}
Choose a basis $\{\grb_i\}_{i=0}^k$ of $H^2(N,\bbz)/{\rm Tor}\approx \bbz^{k+1}$ that diagonalizes $c_1(N)$ and such that $\grb_0=[\gro_N]$.
We have 
\begin{equation}\label{c1diag}
c_1(N)=b_0[\gro_N]+\sum_{i=1}^kb_i\grb_i,
\end{equation}
and
\begin{equation}\label{c1orbdiag}
c_1^{orb}(N\times \bbc\bbp^1[\bfw])=c_1(N)+c_1^{orb}(\bbc\bbp^1[\bfw])= b_0[\gro_N]+\sum_{i=1}^kb_i\grb_i+ |\bfw|\grb_{k+1}
\end{equation}
where $\grb_{k+1}=\frac{[\gro_0]}{w_1w_2}$ is an integral class in the orbifold cohomology group $H^2_{orb}(\bbc\bbp^1[\bfw],\bbz)$ although it is a rational class in the ordinary cohomology $H^2(\bbc\bbp^1,\bbq)$. Note that $c_1^{orb}(N\times \bbc\bbp^1[\bfw])$ is positive if and only if $c_1(N)$ is positive. Now $c_1^{orb}$ pulls back to $c_1(\calf_{\xi_\bfw})$ on $M_{\bfl,\bfw}$, so we have
\begin{equation}\label{c_1calf}
c_1(\calf_{\xi_\bfw})=\pi^*c_1^{orb}(N\times \bbc\bbp^1[\bfw])=\sum_{i=0}^{k+1}b_i\grg_i,
\end{equation}
where $\grg_i=\pi^*\grb_i$ form a basis of $H^2_B(\calf_\xi)$. Actually the forms $\grb_i$ are $(1,1)$ forms on $N\times \bbc\bbp^1[\bfw]$, so the forms $\grg_i$ are basic $(1,1)$-forms, i.e. they are in $H^{1,1}_B(\calf_\xi)$. Furthermore, $c_1(\calf_{\xi_\bfw})$ is an integral class.

Now consider a general quasi-regular Sasakian structure $\cals_\bfv$ with Reeb vector field $\xi_\bfv$ in the $\bfw$-Sasaki cone $\gt^+_\bfw$. The base orbifold $(S_n,\grD)$ has orbifold first Chern class
\begin{equation}\label{Snorbc1}
c_1^{orb}(S_n,\Delta) = c_1(N) + \frac{1}{m_1}PD(D_1)+\frac{1}{m_2}PD(D_2),
\end{equation}
where $c_1(N)$ is viewed as a pull-back to $S_n$ and the ramification indices $m_i$ satisfy Equation \eqref{ramind}. Now we have $\pi_\bfv^*c_1^{orb}(S_n,\Delta)=c_1(\calf_{\xi_\bfv})$, so $c_1^{orb}(S_n,\Delta)$ is positive if and only if $c_1(\calf_{\xi_\bfv})$ is positive. Moreover, it follows from the analysis of Section 5.1 of \cite{BoTo14a} that $c_1(\calf_{\xi_\bfv})$ is smooth function on $\gt^+_\bfw$.

\begin{proposition}\label{c1wc1v}
Consider the $S^3_\bfw$-join $M_{\bfl,\bfw}$. If $c_1(N)$ is positive, then there is a positive Sasakian structure in the $\bfw$-Sasaki cone $\gt_\bfw^+\subset \gt^+$, and ${\bf S}_\xi>0$ for all $\xi\in\gt^+$; hence, ${\bf H}_1(\xi_{min})>0$.
\end{proposition}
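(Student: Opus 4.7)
The plan is to exhibit a positive Sasakian structure directly inside the $\bfw$-Sasaki cone, and then invoke Theorem~\ref{BHLthm}(2) together with the definition of ${\bf H}_1$ to conclude. The natural candidate is the quasi-regular Sasakian structure $\cals_\bfw$ whose Reeb vector field is $\xi_{\bfl,\bfw}$ from Equation~\eqref{sasjoinxieta}; its quotient orbifold is $N \times \bbc\bbp^1[\bfw]$, whose orbifold first Chern class was computed explicitly in Equation~\eqref{c1orbdiag}.

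First I would observe that if $c_1(N)>0$, then by the remark following \eqref{c1orbdiag} the class $c_1^{orb}(N\times\bbc\bbp^1[\bfw])$ is positive, since the weighted projective factor $|\bfw|\grb_{k+1}$ contributes a positive class in $H^2_{orb}(\bbc\bbp^1[\bfw],\bbz)$ and positivity is preserved by taking products of K\"ahler/orbifold K\"ahler forms. Next, pulling back via $\pi:M_{\bfl,\bfw}\to N\times\bbc\bbp^1[\bfw]$ and using Equation~\eqref{c_1calf}, one gets
\[
c_1(\calf_{\xi_\bfw})=\pi^*c_1^{orb}(N\times\bbc\bbp^1[\bfw]),
\]
and since $\pi$ identifies the transverse holomorphic structure of $\cals_\bfw$ with the holomorphic (orbifold) structure on the base, the pulled-back positive $(1,1)$-form is a positive basic $(1,1)$-form in $H^{1,1}_B(\calf_{\xi_\bfw})$. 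Hence $\xi_\bfw\in\gp^+_\bfw$, establishing the first assertion.

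For the remaining assertions, since $\gp^+$ is non-empty, part (2) of Theorem~\ref{BHLthm} immediately gives ${\bf S}_\xi>0$ for every $\xi\in\gt^+$. Finally, with ${\bf V}_\xi>0$ always and ${\bf S}_\xi>0$ for all $\xi\in\gt^+$, the signed Einstein--Hilbert functional in \eqref{signEH} is strictly positive everywhere on $\gt^+$; in particular ${\bf H}_1(\xi_{min})>0$ (this is precisely Theorem~\ref{scalarpos2} read in the contrapositive direction).

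There is no substantive obstacle; the only point that requires a little care is the verification that pullback under $\pi$ preserves positivity as a transverse $(1,1)$-form, which follows because $\pi$ is an orbifold Riemannian submersion whose horizontal subbundle is canonically identified with $\cald$ together with its transverse complex structure. All other steps are direct appeals to the structural results recalled earlier in the paper.
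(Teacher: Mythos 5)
Your proof is correct and follows the same route as the paper: positivity of $c_1(N)$ gives positivity of $c_1^{orb}(N\times\bbc\bbp^1[\bfw])$, which pulls back via Equation \eqref{c_1calf} to show the Sasakian structure with Reeb field $\xi_{\bfl,\bfw}$ lies in $\gp^+_\bfw$, and then Theorem \ref{BHLthm}(2) yields ${\bf S}_\xi>0$ on all of $\gt^+$ and hence ${\bf H}_1(\xi_{min})>0$. You have merely spelled out the details that the paper compresses into a one-line appeal to \eqref{c_1calf} and Theorem \ref{BHLthm}.
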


\begin{proof}
The first statement follows immediately from Equation \eqref{c_1calf}, and then the second statement follows from Theorem \ref{BHLthm}. 
\end{proof}

The converse to the first statement of Proposition \ref{c1wc1v} is also true, which we prove in Proposition \ref{typech} below. Thus, if a Sasakian structure $\cals_\bfv=(\xi_\bfv,\eta_\bfv,\Phi,g)\in \gt^+_\bfw$ is positive, then $c_1(\calf_{\xi_\bfw})$ is positive. However, generally not all Sasakian structures in $\gt_\bfw^+$ will be positive which leads to type changing in $\gt_\bfw^+$.

\subsection{Type Changing}
We describe this type changing in the case of the $S^3_\bfw$-join with a regular Sasaki manifold $M$. However, before doing so we prove the converse of Proposition \ref{c1wc1v}. This shows that $N$ must be Fano in which case we set $b_0=\cali_N$ which is referred to as the Fano index in the monotone case. Note, however, that we do not restrict to the monotone case. We have

\begin{proposition}\label{typech}
Let $M_{\bfl,\bfw}$ be a smooth $S^3_\bfw$ join as described in Section \ref{join}.
If there is a positive Sasakian structure in the $\bfw$-cone $\gt^+_{\bfl,\bfw}$, then $c_1(N)$ is positive, that is, $N$ is Fano. 
When $N$ is Fano, the following hold:
\begin{enumerate}
\item The entire $\bfw$-cone $\gt^+_{\bfl,\bfw}$ is positive if and only if $l_2\cali_N> l_1w_1$. Equivalently $\gt^+_{\bfl,\bfw}$ is positive everywhere if and only if $c_1(\cald_{\bfl,\bfw})+l_1w_2\grg>0$.  
\item If $\frac{\cali_Nl_2}{w_2l_1}<1$ the range of positivity $\gp^+_\bfw$ is 
$$\frac{w_1}{w_2}-\frac{l_2\cali_N}{l_1w_2}<\frac{v_1}{v_2}<\frac{w_1}{w_2}\frac{1}{1-\frac{l_2\cali_N}{l_1w_2}}.$$
\item If $1\leq \frac{\cali_Nl_2}{w_2l_1}<\frac{w_1}{w_2}$ then $\gp^+_\bfw$ is
$$\frac{w_1}{w_2}-\frac{l_2\cali_N}{l_1w_2}<\frac{v_1}{v_2}.$$
\end{enumerate}
\end{proposition}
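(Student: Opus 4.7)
The plan is to reduce positivity of $c_1(\calf_{\xi_\bfv})$ for $\xi_\bfv\in\gt^+_{\bfl,\bfw}$ to an orbifold K\"ahler condition on the log-pair quotient. Since $\gp^+$ is open by Proposition \ref{posopen} and the quasi-regular Reebs are dense in $\gt^+_{\bfl,\bfw}$, it is enough to decide positivity on the quasi-regular stratum. For such a $\xi_\bfv$ the quotient is the log pair $(S_n,\grD)$ with $S_n=\bbp(\BOne\oplus L_n)$ from Section \ref{join}, and $c_1(\calf_{\xi_\bfv})=\pi_\bfv^*\,c_1^{orb}(S_n,\grD)$, so $\cals_\bfv$ is positive exactly when $c_1^{orb}(S_n,\grD)$ is an orbifold K\"ahler class on $(S_n,\grD)$.

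\textbf{Key restrictions and the converse direction.} Applying \eqref{Snorbc1} and using that the normal bundles $N_{D_i/S_n}$ satisfy $c_1(N_{D_i/S_n})=\pm n[\gro_N]$, I get
\[
c_1^{orb}(S_n,\grD)|_{D_1}=c_1(N)+\frac{n}{m_1}[\gro_N],\qquad c_1^{orb}(S_n,\grD)|_{D_2}=c_1(N)-\frac{n}{m_2}[\gro_N].
\]
The central geometric claim is that $c_1^{orb}(S_n,\grD)$ is K\"ahler on $(S_n,\grD)$ if and only if both of these restrictions are K\"ahler on $N$. The easy direction (restriction of a K\"ahler class is K\"ahler) already yields the converse statement of the proposition: the combination $m_1\Omega|_{D_1}+m_2\Omega|_{D_2}=(m_1+m_2)c_1(N)$ kills the $[\gro_N]$-terms, so the existence of a positive element in $\gt^+_{\bfl,\bfw}$ forces $c_1(N)>0$ and hence $N$ Fano. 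For the reverse direction I intend to invoke the admissible K\"ahler construction of \cite{BoTo13,BoTo14a}, which produces an explicit K\"ahler metric in any class of this form whose restrictions to $D_1$ and $D_2$ are K\"ahler.

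\textbf{Extraction of (1)--(3).} Specializing to the monotone case $c_1(N)=\cali_N[\gro_N]$ (the general Fano case reduces to this since the perturbations $\pm(n/m_i)[\gro_N]$ only affect the $b_0$-component in the diagonalization \eqref{c1diag}), the two K\"ahler conditions become $\cali_N+n/m_1>0$ and $\cali_N-n/m_2>0$. Substituting $n=\frac{l_1}{s}(w_1v_2-w_2v_1)$, $m_i=\frac{l_2v_i}{s}$, and writing $t=v_1/v_2>0$, I arrive at
\[
t(\cali_N l_2-l_1w_2)+l_1w_1>0,\qquad l_1w_2\,t>l_1w_1-\cali_N l_2,
\]
and a three-way case split on the signs of $\cali_N l_2-l_1w_2$ and $\cali_N l_2-l_1w_1$ gives items (1)--(3) directly. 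The equivalent reformulation of (1) in terms of $c_1(\cald_{\bfl,\bfw})+l_1w_2\grg>0$ follows from \eqref{c1cald} after using $\pi^*[\gro_N]=l_2\grg$. The principal obstacle I foresee is the K\"ahler-sectional criterion asserted in the second paragraph; once that is established, the remaining steps are substitution and elementary case analysis.
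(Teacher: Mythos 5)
Your route is essentially the one the paper takes: reduce to quasi-regular rays, identify $c_1(\calf_{\xi_\bfv})$ with $\pi_\bfv^*c_1^{orb}(S_n,\grD)$, decide positivity of $c_1^{orb}(S_n,\grD)$ through the admissible K\"ahler classes, and then substitute \eqref{ramind} and do the case analysis. Your ``restriction to $D_1,D_2$'' criterion is, via \eqref{cohrel}, exactly the condition that the fiber part of $c_1^{orb}(S_n,\grD)$ is admissible with $0<|r|<1$ and $rn>0$, i.e.\ it reproduces the inequalities \eqref{fano}: necessity is the restriction argument, and sufficiency is supplied by the admissible construction of \cite{ACGT08}, which is precisely what the paper invokes. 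Your identity $m_1\,c_1^{orb}|_{D_1}+m_2\,c_1^{orb}|_{D_2}=(m_1+m_2)c_1(N)$ gives the Fano necessity a little more directly than the paper (which derives a contradiction with \eqref{fano} when $b_0\le 0$). One caution: in the non-monotone Fano case you should not literally ``reduce to the monotone case''; rather split off the semi-positive pullback piece $\sum_{i\ge 1}b_i\grb_i$ and apply the admissible criterion to the remaining part built from $[\gro_N]$ and $PD(D_1)+PD(D_2)$ — the same (mild) informality present in the paper's own proof, where only $b_0=\cali_N$ enters the inequalities.

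The one step that does not work as you state it is the reduction to the quasi-regular stratum. Openness of $\gp^+$ (Proposition \ref{posopen}) plus density of quasi-regular rays does show that a ray strictly outside the region cut out by \eqref{poseqns} cannot be positive, and it suffices for the Fano necessity (a positive element forces a nearby positive quasi-regular one); but it does not show that irregular rays inside that region are positive, since positivity is an open, not closed, condition and nearby positive quasi-regular rays provide no certificate at the irregular ray itself. The paper closes this with a continuity argument: $c_1(\calf_{\xi_\bfv})$, and indeed the positive representative coming from the admissible construction (for which $m\Theta(\gz)$ varies smoothly with $\bfv$, cf.\ the Appendix and Section 5.1 of \cite{BoTo14a}), depends smoothly on $\bfv\in\gt^+_\bfw$, so positivity established at quasi-regular rays extends across the irregular ones in the region. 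Make that ingredient explicit; with it, your two inequalities are exactly \eqref{poseqns} and your case split yields items (1)--(3) as in the paper.
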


\begin{proof}
We prove this first in the quasi-regular case and then by a continuity argument as done in \cite{BoTo14a} the irregular case follows. Given a quasi-regular Reeb vector field $\xi_\bfv$ with $\bfv=(v_1,v_2)$ a pair of relatively prime positive integers, we have a base orbifold of the form $(S_n,\grD)$ where $S_n$ is a smooth projective variety of the form $\bbp(\BOne\oplus L_n)$ over a K\"ahler manifold $N$ with branch divisor $\grD$.  Moreover, from  Equation \eqref{Snorbc1} and the relations
\begin{equation}\label{cohrel}
2\pi(PD(D_1)-PD(D_2))=[\omega_{N_n}], \qquad \omega_{N_n} = 2\pi n\omega_N,
\end{equation}
the orbifold first Chern class is given by
\begin{equation}\label{orbiChern}
c_1^{orb}(S_n,\Delta) = c_1(N) + \bigl(\frac{1}{m_1}-\frac{1}{m_2}\bigr)\frac{[\omega_{N_n}]}{4\pi} +  \bigl(\frac{1}{m_1}+\frac{1}{m_2}\bigr)\frac{(PD(D_1)+PD(D_2))}{2}.
\end{equation}
Now we have an admissible K\"ahler class on $(S_n,\Delta)$ as adapted from \cite{ACGT08} which can be written - up to a positive scaling factor - as
\begin{equation}\label{admKahclass}
\Omega_{\mathbf r}  = [\omega_{N_n}]/r + 2 \pi PD[D_1+D_2],
\end{equation}
where $0<|r|<1$, $\omega_{N_n} = 2\pi n\omega_N$, and $r\cdot n>0$. Note that $r$, $n$ and $\omega_{N_n}$ all have the same sign. Then Equation \eqref{orbiChern} gives
\begin{equation}\label{c1orb2}
c_1^{orb}(S_n,\Delta) =\sum_{i=1}^k b_i\grb_i +\bigl(\frac{2b_0}{n}+\frac{1}{m_1}-\frac{1}{m_2}\bigr)\frac{[\omega_{N_n}]}{4\pi} +  \bigl(\frac{1}{m_1}+\frac{1}{m_2}\bigr)\frac{(PD(D_1)+PD(D_2))}{2}.
\end{equation}
We, therefore, see from the admissible classes \eqref{admKahclass} 
that $c_1^{orb}(S_n,\Delta)$ is positive if and only if $b_i>0$, the classes $\grb_i$ are represented by positive definite $(1,1)$-forms, and the following inequalities hold
 \begin{equation}\label{fano}
 \begin{array}{ccccc}
(\frac{2b_0}{n} + \frac{1}{m_1}- \frac{1}{m_2}) & > & ( \frac{1}{m_1}+ \frac{1}{m_2}) & \text{if} & n>0\\
\\
(\frac{2b_0}{n} +  \frac{1}{m_1}- \frac{1}{m_2}) & < & - ( \frac{1}{m_1}+ \frac{1}{m_2}) & \text{if} & n<0.
\end{array}
\end{equation}
Thus, if $b_0\leq 0$ we get a contradiction since the $m_i$ are positive. This implies the first statement that $N$ must be Fano. So we set $b_0=\cali_N$ the Fano index. Then using Equations \eqref{ramind} we can rewrite these conditions  as 
\begin{equation}\label{poseqns}
0<
\begin{cases}
\cali_Nl_2v_2 -l_1(w_1v_2 - w_2v_1) & \text{if $w_1v_2 - w_2v_1>0$} \\
\cali_Nl_2v_1 +l_1(w_1v_2 - w_2v_1) & \text{if $w_1v_2 - w_2v_1<0$.}
\end{cases}
\end{equation}
Note that sending $n\mapsto -n$ the left hand inequalities become automatic. From this one obtains the result. 
\end{proof}
\begin{remark}
If $N$ is Fano and $(m_1,m_2)=(1,1)$ then the inequalities \eqref{fano} are due to Koiso \cite{Koi90}. See also Theorem 3.1 in \cite{ACGT08b}.
\end{remark}
\begin{remark}
Notice that in all cases there is some range of positivity; however, the range shrinks as $\frac{l_2\cali_N}{l_1w_2}$ tends to $0$. From  Proposition \ref{typech} we have
\begin{corollary}\label{openpos}
If $N$ is Fano, then there is a connected open subcone of positive Sasakian structures in $\gt^+_\bfw$. If $N$ is not Fano then all elements of $\gt^+_\bfw$ are indefinite.
\end{corollary}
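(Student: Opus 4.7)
The plan is to derive Corollary \ref{openpos} as a rather direct consequence of Proposition \ref{typech}, using the two-dimensionality of $\gt^+_\bfw$ to rule out null and negative Sasakian structures in the non-Fano case.

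For the second (easier) half, I would argue the contrapositive. By the first assertion of Proposition \ref{typech}, the existence of any positive element in $\gt^+_{\bfl,\bfw}$ forces $N$ to be Fano; hence if $N$ is not Fano, $\gp^+_\bfw=\emptyset$. Since the join construction gives $\dim\gt^+_{\bfl,\bfw}\geq 2$ (the $\bfw$-Sasaki cone is the open first quadrant of $\bbr^2$), the general dichotomy recorded in Section \ref{Sascone} applies: every Sasakian structure in $\gt^+_{\bfl,\bfw}$ must be either positive or indefinite, so in the non-Fano case every element is indefinite.

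For the first (Fano) half, I would simply split into the three cases of Proposition \ref{typech}. In case (1) the entire cone $\gt^+_{\bfl,\bfw}$ is positive, and there is nothing to check. In cases (2) and (3) the positive subset $\gp^+_\bfw$ is cut out by explicit inequalities on the ratio $v_1/v_2$, so it is automatically an open, scale-invariant (hence conical) subset. The only thing that needs verifying is that the prescribed range of $v_1/v_2$ is non-empty. Writing $a=w_1/w_2\geq 1$ and $b=l_2\cali_N/(l_1w_2)$, case (3) asks for $v_1/v_2>a-b$ with $1\leq b<a$, which is an unbounded interval in $(0,\infty)$; case (2) asks for
\[
a-b<\frac{v_1}{v_2}<\frac{a}{1-b},\qquad 0<b<1,
\]
and the inequality $a-b<a/(1-b)$ reduces (after multiplying by $1-b>0$) to $b(a+1-b)>0$, which holds since $0<b<1\leq a+1$. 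Thus the range is always a non-empty open interval, and the preimage under $(v_1,v_2)\mapsto v_1/v_2$ is a connected open subcone of $\gt^+_{\bfl,\bfw}$.

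I do not anticipate any substantive obstacle here; the entire content is an assembly of Proposition \ref{typech} with the structural observation that a 2-dimensional Sasaki cone admits no null or negative elements. The only mildly delicate point is being careful with the case $a-b\leq 0$ in case (2), where the lower bound is superseded by the positivity constraint $v_1/v_2>0$ — but this only enlarges the positive range and does not affect the conclusion.
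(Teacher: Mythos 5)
Your proposal is correct and follows essentially the same route as the paper, which states Corollary \ref{openpos} as an immediate consequence of Proposition \ref{typech}: in the Fano case each of the three cases yields a non-empty open interval of ratios $v_1/v_2$ (hence a connected open subcone), and in the non-Fano case the absence of positive elements combined with the dichotomy for $\dim\gt^+>1$ forces every element of $\gt^+_\bfw$ to be indefinite. Your explicit check that the case (2) interval is non-empty is exactly the elementary verification the paper leaves to the reader.
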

\end{remark}

\begin{remark}
The bound $B$ described at the end of Section \ref{Sascone} is given by $l_1w_2$ when $N$ is Fano and has no Hamiltonian vector fields. Generally, if $N$ has Hamiltonian vector fields, the bound $l_1w_2$ is only a bound for the restriction of $B$ to the $\bfw$ subcone $\gt^+_\bfw\subset \gt^+$.
\end{remark}

\subsection{Examples} The simplest case is when the affine cone $(C(M)$ over $M$ is Gorenstein or $\bbq$-Gorenstein, or equivalently $c_1(\cald)_\bbq=c_1(\cald)\otimes\bbq=0$. By Theorem \ref{c10pos} the entire Sasaki cone is positive so there is no type changing. Moreover, it is well known \cite{MaSpYau06} that there is precisely one critical point of ${\bf H}_1(\xi)$ and it is a minimum with ${\bf H}_1(\xi_{min})>0$. A particularly well known case is that of $Y^{p,q}$ \cite{GMSW04a}. It was shown in Corollary 5.5 of \cite{BoPa10} that for $p$ fixed we have a $\phi(p)$-bouquet of Sasaki cones where $\phi(p)$ is the Euler phi function, and all the Sasakian structures of the bouquet are positive. Recall \cite{Boy10a} that bouquets occur by deforming the transverse complex structure.


The contact equivalence problem was studied in \cite{BoPa10} for the case of $S^3$ bundles over $S^2$. Even in this case the problem is far from completely solved. The notation there is quite different from that used in \cite{BoTo14a} and here where the underlying contact CR structure is labeled by the four integral parameters $(l_1,l_2;w_1,w_2)$. In \cite{BoPa10} the contact vector bundle is label by the three integers $k,j,l$. Moreover, the primitive generator of $H^2(M,\bbz)$ that we call $\grg$ here is $-\grg$ in \cite{BoPa10}. The relation between the labels are
$$l_1=\gcd(2k-j,j),~~l_2=l,~~l_1\bfw=(2k-j,j)$$
where $j=1,\ldots,k$.
This implies $j=l_1w_2=B$,  and $2k=l_1|\bfw|$. In \cite{BoPa10} the authors also defined a map $g$ which associates a positive integer $i$ to the set $\{j=1,\ldots,k\}$. Theorem 4.11 of \cite{BoPa10} shows that the elements of the level sets of $g$ are $T^2$ equivariantly equivalent but not $T^3$ equivariantly equivalent. This implies that the level set $g^{-1}(i)$ gives rise to a $k$-bouquet of Sasaki cones whose cardinality $k$ is the cardinality of $g^{-1}(i)$, see Corollary 5.3 of \cite{BoPa10}.  Note that the integer $i=g(j)=\gcd(l,2(k-j))$ is the integer $s$ for the almost regular Reeb vector field, i.e. $\bfv=(1,1)$.

\begin{remark}
The integer $k$ should be an invariant of the contact structure. If the problems involving transversality for the contact homology in this case can be resolved, Proposition 3.11 of \cite{BoPa10} would show that the integer $k$ is a contact invariant. Nevertheless, Theorem 4.11 of \cite{BoPa10} does give $k$ as a lower bound to the number of conjugacy classes of 3-dimensional tori in the contactomorphism group.
\end{remark}

We illustrate type changing with several examples. The first two are examples of bouquets.  For the case of $S^2\times S^3$ discussed in Examples \ref{posex} and \ref{posex2} below, see Corollary 5.3 of \cite{BoPa10}.
The third example does not explicitly involve bouquets. They are all joins of the form $S^{2p+1}\star_{l_1,l_2}S^3_\bfw$. The first two have $p=1$, whereas the third has $p>1$.
From item (1) in Proposition \ref{typech} we see that if $c_1(\cald)$ is non-negative, then all $\bfw$-cones  in a bouquet will be entirely positive. Hence, for contact structures with $c_1(\cald)\geq 0$ complete positivity of $\gt^+_{\bfl,\bfw}$ depends only on the bouquet and not on the Sasaki cones in the bouquet. In particular this holds for the cohomologically Einstein case $c_1(\cald)_\bbr= 0$ which depends only on the ray in $\gt^+$.

\begin{example}\label{posex}
Consider the bouquet on $S^2\times S^3$ described in \cite{BoTo14P}, namely 
\medskip

\centerline{$4$-bouquet on $S^2\times S^3$}
\begin{center}
\begin{tabular}{ | c |l| c | c |l c | c | c | c | }
\hline
$m$ & $l_1$ & \bfw & B \\ \hline
0 & 4 & (1,1) & 4 \\ \hline 
1 & 1 & (5,3) & 3 \\ \hline 
2 & 2 & (3,1) & 2 \\ \hline 
3 & 1 & (7,1) & 1 \\ \hline 
\end{tabular}
\end{center}
\medskip

\noindent Here we take $l_2=1$ with $m=\frac{1}{2}l_1(w_1-w_2)$, and we have $g^{-1}(1)=\{1,2,3,4\}$. For all members of the bouquet we have $c_1(\cald)=-6\grg$ where $\grg$ is a positive generator of $H^2(S^2\times S^3,\bbz)$. Note that we have $j=l_1w_2=B$ which give the four cases, and that $B+m=k=4$ which we believe to be a contact invariant. The positivity range $\gp^+_\bfw$ in the four cases are:

\begin{center}
\begin{tabular}{| l | l |}
\hline
$m$ & positivity range \\ \hline
$0$ & $\frac{1}{2}<\frac{v_1}{v_2}<2$ \\ \hline
$1$ & $1<   \frac{v_1}{v_2}<5$ \\ \hline
$2$ & $2<   \frac{v_1}{v_2}$ \\ \hline
$3$ & $5<   \frac{v_1}{v_2}$ \\ \hline
\end{tabular}
\end{center}

\noindent We note that the regular ray $\bfv=(1,1)$ occurring  in each element of the bouquet corresponds to an $S^1$-bundle of $S^2\times S^3$ over the even Hirzebruch surface $\calh_{2m}$ for $m=0,1,2,3$. Of course, only when $m=0$ is this ray positive. It follows from Proposition \ref{c1wc1v} that $H_1(\xi_{min})>0$ for all Sasaki cones of the bouquet. Moreover, from Section 5.3 of \cite{BHLT15} we have critical points of ${\bf H}_1$ that lie in the $\bfw$ subcone and which all have constant scalar curvature. They are all minimum restricted to their $\bfw$-Sasaki cone, but we do not know whether they are a global minimum. For $m=0,1$ the minima are positive Sasakian structures; whereas, for $m=2,3$ they are indefinite Sasakian structures.
\end{example}

Next we give an example where for one element of a bouquet the entire $\bfw$-cone is positive, but in the other it is not.

\begin{example}\label{posex2}
Again our manifold is $S^2\times S^3$, but now $l_2=3$ with the contact structure satisfying $c_1(\cald)=-2\grg$. We also need to satisfy the smoothness condition \cite{BoTo14a} $\gcd(3,l_1w_1w_2)=1$. In this case we have a regular two-bouquet:
\medskip

\centerline{$2$-bouquet on $S^2\times S^3$}
\begin{center}
\begin{tabular}{ | c |l| c | c |l c | c | c | c | }
\hline
$m$ & $l_1$ & \bfw & B \\ \hline
0 & 4 & (1,1) & 4 \\ \hline
3 & 1 & (7,1) & 1 \\ \hline
\end{tabular}
\end{center}
\medskip

\noindent For $m=0$ we have $l_2\cali_N=6>4=l_1w_1$, so the entire $\bfw$-cone is positive. However, for $m=3$ we have $l_2\cali_N=6<7=l_1w_1$, so this has positivity range $1<\frac{v_1}{v_2}$.
\end{example}

\begin{example}\label{pex}
Consider what happens when we take the $S^3_\bfw$ join with an odd dimensional sphere $S^{2p+1}$ of arbitrary dimension, namely, $S^{2p+1}\star_{l_1,l_2}S^3_\bfw$.
When $p>1$ the integral cohomology ring is
\begin{equation}\label{cohring}
\bbz[x,y]/(w_1w_2l_1^2x^2,x^{p+1},x^2y,y^2)
\end{equation} 
where $x$ is a 2-dimensional class and $y$ is a $2p+1$ dimensional class.
So the topology of the manifold $M_{\bfl,\bfw}$ depends on the product $w_1w_2l_1^2$. Moreover, it follows from Sullivan's rational homotopy theory \cite{Sul77,BoTo14b} that with $w_1w_2l_1^2$ fixed there are a finite number of diffeomorphism types as $l_2$ varies through positive integers that are relatively prime to $l_1w_1w_2$. But generally the contact structure can also vary with $l_2$ as we have 
$$c_1(\cald_{l_1,l_2,\bfw})= (l_2(p+1)-l_1|\bfw|)\grg.$$
The case $c_1(\cald_{l_1,l_2,\bfw})= 0$ can be realized by taking $l_1=p+1$ and $l_2=|\bfw|=w_1+w_2$ and was studied in Section 3.1.1 of \cite{BoTo14NY}. Here the entire $\bfw$ Sasaki cone $\gt^+$ is positive by Theorem \ref{c10pos}.

In this case we have little control over whether or not bouquets exist. Nevertheless, the type changing described by Proposition \ref{typech} still occurs. Let us describe this with a simple example. Put $p=2$ and $w_1w_2l_1^2=12$. This gives manifolds $M_{1,l_2,12,1}, M_{1,l_2,4,3},M_{2,l_2,3,1}$ with isomorphic cohomology rings $\bbz[x,y]/(12x^2,x^{p+1},x^2y,y^2)$ with $\gcd(l_2,6)=1$. The corresponding contact structures satisfy 
\begin{equation}\label{c1ex}
c_1(\cald_{1,l_2,12,1})=(3l_2-13)\grg,~ c_1(\cald_{1,l_2,4,3})=(3l_2-7)\grg,~c_1(\cald_{2,l_2,3,1})=(3l_2-8)\grg.
\end{equation}
So the contact structure varies with $l_2$. Notice that since $l_2$ must be odd the second Stiefel-Whitney class $w_2$ vanishes for $M_{1,l_2,12,1}$ and $M_{1,l_2,4,3}$, and does not vanish for $M_{2,l_2,3,1}$ which implies that $M_{2,l_2,3,1}$ is not homotopy equivalent to $M_{1,l_2,12,1}$ or $M_{1,l_2,4,3}$.
We do not know at this stage whether the manifolds $M_{1,l_2,12,1}$ and $M_{1,l_2,4,3}$ are diffeomorphic, homeomorphic, or even homotopy equivalent.
Note, however, that we have $c_1(\cald_{1,l'_2,12,1})=(3l'_2-13)\grg= c_1(\cald_{1,l_2,4,3})=(3l_2-7)\grg$ if and only if $l'_2=l_2+2$. 

One can easily determine the ranges of positivity for these contact manifolds. For example for $M_{1,l_2,12,1}$ we have the positivity range as follows $\gp^+_\bfw=\gt^+_\bfw$ if $l_2\geq 4$, whereas, for $l_2=1,2,3$, the positivity subcone $\gp^+_\bfw$ is given by $v_1/v_2>9,6,3$, respectively. Similarly, for $M_{1,l_2,4,3}$ we have $\gp^+_\bfw=\gt^+_\bfw$ for $l_2\geq 2$ and $\gp^+_\bfw$ given by $v_1/v_2>1/3$ for $l_2=1$. Notice that $M_{1,3,12,1}$ and $M_{1,1,4,3}$ could belong to a bouquet, but we have not proven this.

Other similar examples can easily be worked out. 
\end{example}

\appendix\section{Admissible metrics with positive Ricci curvature}\label{positive explicit}

The purpose of this appendix is to demonstrate how one can easily produce explicit admissible examples of Sasaki metrics with positive Ricci curvature on certain joins.
In order to do so, we need to first give a quick introduction to admissible K\"ahler metrics (Section \ref{prelimapp}). Then we will use this to produce explicit orbifold admissible K\"ahler 
metrics with positive Ricci curvature (Section \ref{explicitKPosRic}). Finally, we mention how this construction can be lifted to the Sasaki level (Section \ref{sasapp}). 

Smooth admissible K\"ahler metrics were defined in \cite{ACGT08}. In its full generality it combines the formalism of various constructions \cite{Gua95, Hwa94, HwaSi02, KoSa86,LeB91b,PePo91,To-Fr98,Sa86} that in turn generalized a well-known construction, used by Calabi~\cite{Cal82} to construct extremal K\"ahler metrics on the Hirzebruch complex surfaces. Here we will not need the full generality of the admissible set-up (we have only one piece in the base and no "blow-downs"), but we will extend to a mild orbifold case.

\subsection{Preliminaries}\label{prelimapp}
Let $\omega_N$ be a primitive integral  K\"ahler form of a 
CSC K\"ahler metric on $(N,J)$,  ${\BOne} \rightarrow N$ be the 
trivial complex line bundle, $n \in {\bbz}\setminus \{ 0 \}$, and let $ L_n \rightarrow N$ 
be a holomorphic line bundle with $c_1( L_n) = [n\,\omega_N]$.
Consider the total space of a projective bundle $S_n={\mathbb P}({\BOne} 
\oplus L_n)
\rightarrow N$.
Then $S_n$ is called {\bf admissible}, or an {\bf admissible 
manifold} \cite{ACGT08}.
Now $D_{1} = [\BOne \oplus 0]$ and $D_{2}=[0\oplus L_n]$ are the so-called ``zero'' and ``infinity'' 
sections of $S_n \rightarrow N$.

Let $r$ be a real number 
such that $0< |r|< 1$, and such that $r\, n>0$.
A K\"ahler class on $S_n$,
$\Omega$, is {\bf admissible} if (up to scale)
$$\Omega =  \frac{2\pi n[\omega_{N}]}{r} + 2\pi PD(D_1 + D_2).$$
In general, the {\bf admissible cone} is a sub-cone of the K\"ahler cone.

In each admissible 
class we can now construct {\bf explicit} K\"ahler metrics $g$ (called {\bf admissible K\"ahler metrics})
\cite{ACGT08}.
We can generalize this construction to the log pair $(S_n,\Delta)$,
where $\Delta$ denotes the branch divisor
$\grD= (1-1/m_1)D_1+(1-1/m_2)D_2.$
If $m=\gcd(m_1,m_2)$, then $(S_n,\Delta)$ is a fiber bundle over $N$ with fiber $\bbc\bbp^1[m_1/m,m_2/m]/\bbz_m$.
The admissible metric is smooth on $S_n\setminus (D_1 \cup D_2)$ and has orbifold singularities along $D_1$ and $D_2$.

More specifically, on $S_n\setminus (D_1 \cup D_2)$, an admissible K\"ahler metric with corresponding K\"ahler form is
given up to scale by 
\begin{equation}\label{g}
g=\frac{1+r\gz}{r}g_{N_n}+\frac {d\gz^2}
{\Theta (\gz)}+\Theta (\gz)\theta^2,\quad
\omega = \frac{1+r\gz}{r}\omega_{N_n} + d\gz \wedge
\theta,
\end{equation}
where $(g_{N_n},\omega_{N_n})= (2\pi ng_N,2\pi n \omega_N)$, $\gz$ can be interpreted as a moment map of the natural $S^1$-action, $\theta$ is a connection $1$-form,
and $\Theta$ is a positive smooth function on $(-1,1)$
(for precise definitions, please consult \cite{ACGT08}). Now $\Theta (\gz)$ must satisfy certain endpoint conditions in order for
$g$ to extend as an orbifold metric on $(S_n,\Delta)$:
$$
\begin{array}{l}
\Theta(\pm 1) = 0,\\
\\
\Theta'(- 1) = 2/m_2 \quad \quad \Theta'( 1) =-2/m_1.
\end{array}
$$
Define a function $F(\gz)$ by the formula $\Theta(\gz)=F(\gz)/\gp(\gz)$, where
$\gp(\gz) =(1 + r \gz)^{d_{N}}$.
Since $\gp(\gz)$ is positive for $-1\leq \gz \leq1$, the conditions
on $F(\gz)$ are:
\begin{equation}
\label{positivityF}
\begin{array}{l}
F(\gz) > 0, \quad -1 < \gz <1,\\
\\
F(\pm 1) = 0,\\
\\
F'(- 1) = 2\gp(-1)/m_2 \quad \quad F'( 1) =-2\gp(1)/m_1.
\end{array}
\end{equation}

From \cite{ApCaGa06} we have that the Ricci form of an admissible metric  given by \eqref{g} equals
\begin{equation}\label{rho}
\rho =
\rho_{N} - \frac{1}{2} d d^c \log F = \rho_N - 
\frac{1}{2}\frac{F'(\gz)}{\gp(\gz)}  \omega_{N_n}
-\frac{1}{2}\Bigl(\frac{F'(\gz)}{\gp(\gz)}\Bigr)' d\gz \wedge \theta.
\end{equation}

Note that $r$, $n$ and $\omega_{N_n}$ all have the same sign. Let us assume $N$ is KE so that 
$\rho_N=s_{N_n}\omega_{N_n}$ with
$s_{N_n} =\cali_N/n$.
Then
\begin{equation}\label{rho2}
\rho =
 \left(\cali_N/n - 
\frac{1}{2}\frac{F'(\gz)}{\gp(\gz)} \right) \omega_{N_n}
-\frac{1}{2}\Bigl(\frac{F'(\gz)}{\gp(\gz)}\Bigr)' d\gz \wedge \theta.
\end{equation}
Since $\omega$ and $\frac{1+r\gz}{r}\pi^*\omega_{N_n}$ are both globally defined on $S_n$, then so is $d\gz \wedge \theta$ (even though $\theta$ is not). Thus $\rho$ is a globally defined $(1,1)$-form.

On $S_n\setminus (D_1 \cup D_2)$, the $(1,1)$-forms  $\omega_{N_n}$ and $dz\wedge \theta$ are orthonormal with respect to the orthogonal splitting $TS_n = H \oplus V$ defined by the global K\"ahler metric $g$, where $V=Ker(\pi_*)$ is the vertical space.
Specifically, $dz\wedge \theta \in \wedge^2 V^*$ and $\pi^*\omega_{N_n} \in \wedge^2 H^*$. This must then be true globally by continuity. 
Therefore\footnote{The authors would like to thank Vestislav Apostolov for his kind advice on this argument.}, $\rho$ is positive if and only if for all $z\in [-1,1]$
\begin{equation}\label{cond1}
\left(\cali_N/n -\frac{1}{2}\frac{F'(\gz)}{\gp(\gz)}\right)\cdot n >0
\end{equation}
and
  
\begin{equation}\label{cond2}
\Bigl(\frac{F'(\gz)}{\gp(\gz)}\Bigr)'<0.
\end{equation}
  
In particular, we must have that \eqref{cond1} holds at $z=\pm 1$, so we must have that
$$\left(\cali_N/n -\frac{1}{m_2}\right)\cdot n >0\quad\text{and}\quad \left(\cali_N/n + \frac{1}{m_1}\right)\cdot n >0.$$ 
We see right away, as is already obvious, that this will not happen if $\cali_N \leq 0$. Thus moving forward we assume that $\cali_N >0$. 
Now, if 
$$\left(\cali_N/n -\frac{1}{m_2}\right)\cdot n >0\quad\text{and}\quad \left(\cali_N/n + \frac{1}{m_1}\right)\cdot n >0,$$ 
then  \eqref{cond1} holds at $z=\pm 1$ and then if \eqref{cond2} is also satisfied one can check that \eqref{cond1} must hold for all
$z\in[-1,1]$.
All in all, $\rho$ is positive if and only if
$$\boxed{\left(\cali_N/n -\frac{1}{m_2}\right)\cdot n >0,\quad
\left(\cali_N/n + \frac{1}{m_1}\right)\cdot n >0, \quad
\Bigl(\frac{F'(\gz)}{\gp(\gz)}\Bigr)'<0}$$
Notice that if $n>0$, then the middle condition is automatic while if $n<0$, the first equation is automatic. 
As we saw in the proof of Proposition \ref{typech},  the first two conditions in the box above are equivalent to $c_1^{orb}(S_n,\Delta)$ being positive.

\subsection{Explicit Positive Ricci Curvature Admissible K\"ahler Metrics}\label{explicitKPosRic}

We now assume that $c_1^{orb}(S_n,\Delta)$ is positive. The existence of some positive Ricci curvature K\"ahler metric is well established by Theorem 7.5.19 in \cite{BG05}, but to see an explicit admissible example we need to produce a function $F(\gz)$ satisfying \eqref{positivityF} and \eqref{cond2}. Obviously admissible 
K\"ahler-Einstein metrics - should they exist - would work, so we look to K\"ahler-Einstein constructions \cite{KoSa86} for inspiration (See also Proposition 5.4 in \cite{BoTo14a}). What follows may be viewed as a generalization (different from the usual generalizations) of admissible K\"ahler-Einstein. 

Consider the following function
$$g(t,k) = \left\{ \begin{array}{cc}
2\frac{(\frac{1}{m_1} + \frac{1}{m_2})e^{-kt} - (\frac{e^k}{m_1} + \frac{e^{-k}}{m_2})}{e^k - e^{-k}}& \text{if}\,\,k\neq 0\\
\\
\frac{1-t}{m_2} - \frac{1+t}{m_1} & \text{if}\,\, k=0.
\end{array}\right.$$
One may check that this is continuously differentiable for all $(t,k) \in \bbr^2$.  Further, for any value of $k\in \bbr$, $g(t,k)$ is clearly a smooth function of $t\in \bbr$.

Now observe that 
\begin{itemize}
\item $$\frac{\partial g(t,k)}{\partial t} = \left\{ \begin{array}{cc}
-2k\frac{(\frac{1}{m_1} + \frac{1}{m_2})e^{-kt} }{e^k - e^{-k}}& \text{if}\,\,k\neq 0\\
\\
-(\frac{1}{m_1} + \frac{1}{m_2}) & \text{if}\,\, k=0.
\end{array}\right.$$
is always negative. 
\item $$g(-1,k) = \frac{2}{m_2}\quad \text{and} \quad g(1,k) = \frac{-2}{m_1}.$$

\item For $-1<t<1$, 
$$\lim_{k\rightarrow +\infty} g(t,k) = \frac{-2}{m_1}, \quad \lim_{k\rightarrow -\infty} g(t,k) = \frac{2}{m_2},$$
and
$$\frac{\partial g(t,k)}{\partial k} <0.$$
\end{itemize}

\begin{figure}[h!]
  \caption{$g(t,k)$ for $m_1=m_2=1$ and some $k$ values between -100 and 100}
  \centering
\includegraphics[height=1.2in]{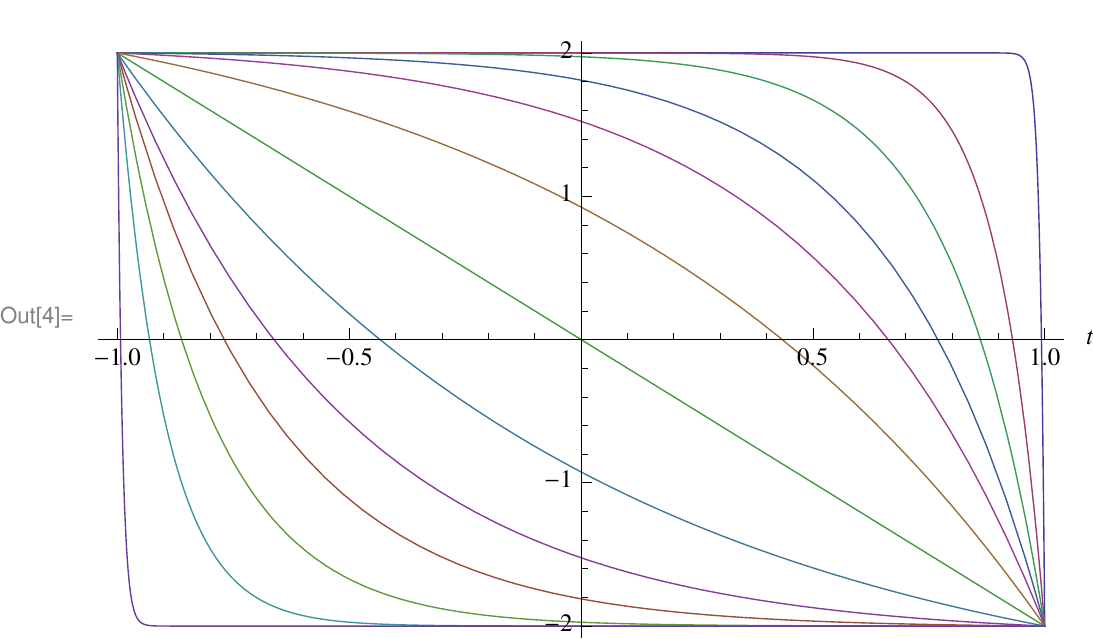}
\end{figure}

\begin{lemma}\label{tool}
There exists a unique $k_{m_1,m_2}\in \bbr$ such that
$$\int_{-1}^{1} g(t,k_{m_1,m_2}) \gp(t)\, dt = 0$$
\end{lemma}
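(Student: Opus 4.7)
Define the auxiliary function
\begin{equation*}
h(k) := \int_{-1}^{1} g(t,k)\, \gp(t)\, dt, \qquad k \in \bbr.
\end{equation*}
The lemma asserts that $h$ has a unique zero. My plan is to establish this via three standard properties of $h$: continuity (in fact smoothness), strict monotonicity, and a sign change at infinity, and then conclude by the intermediate value theorem.

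First, since $g(t,k)$ is jointly smooth on $[-1,1]\times \bbr$ (as stated before the lemma) and $\gp(t) = (1+rt)^{d_N}$ is smooth with $0<|r|<1$, the function $h$ is smooth in $k$ by differentiation under the integral sign. For strict monotonicity, one differentiates:
\begin{equation*}
h'(k) = \int_{-1}^{1} \frac{\partial g}{\partial k}(t,k)\,\gp(t)\, dt.
\end{equation*}
The third bullet in the excerpt gives $\partial g/\partial k < 0$ on $(-1,1)\times \bbr$, and $\gp(t)>0$ on $[-1,1]$, so $h'(k) < 0$ for every $k$. In particular, $h$ is strictly decreasing, which already gives uniqueness of any zero.

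For existence, I would compute the limits $\lim_{k\to\pm\infty} h(k)$. The first two bullets before the lemma give $g(-1,k)=2/m_2$ and $g(1,k)=-2/m_1$, while the third bullet gives the pointwise limits $g(t,k)\to -2/m_1$ as $k\to+\infty$ and $g(t,k)\to 2/m_2$ as $k\to-\infty$ for $t\in(-1,1)$. Combined with the strict negativity of $\partial g/\partial k$, this forces $g(t,k)\in(-2/m_1,\,2/m_2)$ for all $(t,k)\in(-1,1)\times\bbr$; in particular $g$ is uniformly bounded by $2/m_1+2/m_2$ on $[-1,1]\times\bbr$. The dominated convergence theorem (with dominating function this constant times $\gp$) then gives
\begin{equation*}
\lim_{k\to+\infty} h(k) = -\frac{2}{m_1}\int_{-1}^{1}\gp(t)\, dt < 0, \qquad \lim_{k\to-\infty} h(k) = \frac{2}{m_2}\int_{-1}^{1}\gp(t)\, dt > 0,
\end{equation*}
where positivity of $\int_{-1}^1 \gp(t)\,dt$ uses $0<|r|<1$. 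The intermediate value theorem, applied to the continuous strictly decreasing function $h$, then yields a unique $k_{m_1,m_2}\in \bbr$ with $h(k_{m_1,m_2})=0$.

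The only mildly technical point is the justification of the interchange of limit and integral in the computation of $\lim_{k\to\pm\infty}h(k)$, since the pointwise limits of $g(t,k)$ are different at the interior and at the endpoints; this is why one needs the uniform bound on $g$ rather than a uniform convergence argument. Everything else is an immediate consequence of the three bulleted properties listed just before the lemma statement.
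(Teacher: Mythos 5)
Your proposal is correct and follows essentially the same route as the paper: define $f(k)=\int_{-1}^1 g(t,k)\gp(t)\,dt$, use $\partial g/\partial k<0$ and $\gp>0$ to get strict monotonicity (hence uniqueness), compute the limits $\mp\frac{2}{m_{1}},\ \frac{2}{m_2}$ times $\int_{-1}^1\gp(t)\,dt$ as $k\to\pm\infty$, and conclude by the intermediate value theorem. Your added justification of the limit interchange via a uniform bound and dominated convergence is a fair tightening of a step the paper leaves implicit, but it is not a different argument.
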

 
 \begin{proof}
 Recall that $\gp(\gz) >0$ for $-1\leq \gz \leq 1$.
 Now we define the function
 $$f(k) = \int_{-1}^{1} g(t,k) \gp(t)\, dt.
 $$
 The facts listed above gives us that $f(k)$ is continuous for all $k\in \bbr$ and
 $$\lim_{k\rightarrow +\infty} f(k)  = \frac{-2}{m_1} \int_{-1}^{1} \gp(t)\, dt <0$$
while
$$\lim_{k\rightarrow -\infty} f(k)  = \frac{2}{m_2} \int_{-1}^{1} \gp(t)\, dt > 0.$$
Since moreover
$$f'(k) =  \int_{-1}^{1} \frac{\partial g(t,k)}{\partial k}\gp(t)\, dt <0,$$
we conclude that $f(k)$ vanishes for precisely one $k$-value. This proves the lemma.
 \end{proof}

Note that $f(0)=0$ iff
 $$\int_{-1}^1 (\frac{1-t}{m_2} - \frac{1+t}{m_1})\gp(t)\,dt =0.$$
In the event where we also know that
$$2r\cali_N/n = (1+r)/m_2 + (1-r)/m_1,$$
this corresponds to the existence of admissible K\"ahler-Einstein examples given by Proposition 5.4 in 
\cite{BoTo14a}.

In general, we may now define an admissible metric by choosing $F(\gz)$ as follows:
$$F(\gz) = \int_{-1}^\gz g(t,k_{m_1,m_2}) \gp(t)\,dt.$$

Note that 
$$F(\pm 1) =0,$$
$$F'(\gz) = g(\gz,k_{m_1,m_2}){\gp(\gz)},$$
and hence 
$$F'(- 1) = 2\gp(-1)/m_2 \quad \quad F'( 1) =-2\gp(1)/m_1.$$
Moreover, since $\gp(t)>0$ and $g(t,k_{m_1,m_2})$ is a monotone function over $[-1,1]$, which is positive at $\gz=-1$, negative at $\gz = 1$, and satisfies that $\int_{-1}^{1} g(t,k_{m_1,m_2}) \gp(t)\, dt = 0$, we realize that
$F(\gz)>0$ for $-1<\gz <1$. Thus $F(\gz)$ satisfies all the conditions in \eqref{positivityF}.
Finally 
$$\Bigl(\frac{F'(\gz)}{\gp(\gz)}\Bigr)' = \frac{\partial g(\gz,k_{m_1,m_2})}{\partial \gz} <0,$$
and so we have our explicit admissible example with positive Ricci curvature.

\subsection{Explicit Sasaki metrics with positive Ricci curvature}\label{sasapp}
Now consider the $S^3_\bfw$-join $M_{\bfl,\bfw}$ and assume $N$ is a positive KE manifold.
Let $\bfv=(v_1,v_2)\neq \bfw$ be a weight vector with relatively prime integer components and let $\xi_\bfv$ be the corresponding Reeb vector field in the $\bfw$-Sasaki cone $\gt^+_\bfw$. 
Then from \cite{BoTo14a}, we know that the quotient of $M_{l_1,l_2,\bfw}$ by the flow of the Reeb vector field $\xi_\bfv$ is 
a certain log pair $(S_n,\grD)$ (where $n\in \bbz\setminus\{0\}$ and $m_i = m v_i$) with admissible K\"ahler class. 
Assume that $(v_1,v_2)$ is such that $c_1^{orb}(S_n,\Delta)$ is positive.
Then the explicit orbifold K\"ahler metrics with positive Ricci curvature produced in Section \ref{explicitKPosRic}, gives us (after an appropriate homothety transformation) explicit quasi-regular admissible Sasaki metrics with positive Ricci curvature in the ray
of $\xi_\bfv$.  Further, since for the explicit $\Theta(\gz) = F(\gz)\gp(\gz)$, produced in Section \ref{explicitKPosRic}, we have that $m\Theta(\gz)$ is independent of $m$ and varies smoothly with $(v_1,v_2)$, we can lift the admissible construction to the Sasakian level (as in Section 6 of  \cite{BoTo14a}) and get explicit  irregular examples as well.

\def\cprime{$'$} \def\cprime{$'$} \def\cprime{$'$} \def\cprime{$'$}
  \def\cprime{$'$} \def\cprime{$'$} \def\cprime{$'$} \def\cprime{$'$}
  \def\cdprime{$''$} \def\cprime{$'$} \def\cprime{$'$} \def\cprime{$'$}
  \def\cprime{$'$}
\providecommand{\bysame}{\leavevmode\hbox to3em{\hrulefill}\thinspace}
\providecommand{\MR}{\relax\ifhmode\unskip\space\fi MR }
\providecommand{\MRhref}[2]{%
  \href{http://www.ams.org/mathscinet-getitem?mr=#1}{#2}
}
\providecommand{\href}[2]{#2}

\end{document}